\documentclass[11pt, reqno, twoside, makeidx]{amsart}
\usepackage[margin=2.7cm, marginpar=1cm]{geometry}
\usepackage{wrapfig}
\usepackage{marginnote}
\usepackage[english]{babel}
\usepackage[T1]{fontenc}
\usepackage{amsmath,amsthm,amssymb,amsfonts}
\numberwithin{equation}{section}
\usepackage{enumerate}
\usepackage{enumitem}
\usepackage{faktor}
\usepackage{dsfont}
\usepackage{scalerel,stackengine}
\usepackage{textcomp}
\usepackage{graphicx}
\usepackage{extarrows}
\usepackage{nicematrix}
\usepackage{epigraph}
\usepackage{sidecap}
\usepackage{indentfirst}
\usepackage{tikz}
\usepackage{tikz-cd}
\usepackage{tkz-euclide}
\usetikzlibrary{shapes.misc,arrows,matrix,patterns,decorations.markings,positioning}
\tikzset{cross/.style={cross out, draw=black, minimum size=2*(#1-\pgflinewidth), inner sep=0pt, outer sep=0pt},
cross/.default={4.5pt}}
\usepackage{pgfplots}
\usepackage{caption}
\usepackage{subcaption}
\usepackage{float}
\usepackage{color}
\usepackage{xcolor}
\definecolor{burgundy}{rgb}{0.5,0.0,0.13}
\usepackage{epstopdf}
\usepackage{epsfig}
\usepackage{stmaryrd}
\usepackage{multicol}
\usepackage{verbatim}
\usepackage{hyperref}

\definecolor{b}{RGB}{27,77,137}
\definecolor{o}{RGB}{211,84,0}
\definecolor{g}{RGB}{13,113,3}
\definecolor{r}{RGB}{128,9,27}

\DeclareMathOperator{\tb}{tb}
\DeclareMathOperator{\rot}{rot}

\renewcommand{\geq}{\geqslant}
\renewcommand{\leq}{\leqslant} 
\renewcommand{\epsilon}{\varepsilon}

\newcommand{\Z}{\mathbb{Z}}
\newcommand{\Q}{\mathbb{Q}}

\newcommand{\dd}{\mathrm{d}}

\newcommand{\x}{\mathbf{x}}

\newtheorem{teo}{Theorem}[section]
\newtheorem*{teo*}{Theorem}
\newtheorem{lemma}[teo]{Lemma}
\newtheorem{prop}[teo]{Proposition}
\newtheorem*{prop*}{Proposition}

\newtheorem{cor}[teo]{Corollary}

\newtheorem{remark}[teo]{Remark}

\usepackage{xpatch}
\makeatletter
\xpatchcmd{\@thm}{\thm@headpunct{.}}{\thm@headpunct{}}{}{}
\makeatother

\pgfplotsset{compat=1.18}
\begin{document}
\title[Stein fillable contact structures with half Giroux torsion]{Stein fillable contact structures with \\ half Giroux torsion}
\author{Alberto Cavallo and Irena Matkovi\v c}
\address{Uppsala Universitet, Uppsala 751 06, Sweden}
\email{irma6504@student.uu.se}
\email{acavallo@impan.pl}
\subjclass[2020]{57K33, 57K43, 32Q28}

\begin{abstract}
 We show that there exist Stein fillable contact 3-manifolds which contain a separating half Giroux torsion layer. Our examples are on Seifert fibred spaces over the sphere. 
 \vspace{-0.3cm}
\end{abstract}

\maketitle

\thispagestyle{empty}

\section{Introduction}
A conjecture of Ghiggini asked whether a contact 3-manifold which contains a layer of separating half Giroux torsion necessarily had vanishing contact invariant. This has been disproved  by Min and Varvarezos in \cite[Theorem 1.2]{MV}, where they showed that the splice of the complement of the negative trefoil with the one of any torus knot $T_{p,-q}$ admits such a structure.  
It remained open to investigate whether the presence of half Giroux torsion obstructs fillability; in particular, they explicitly ask if this could hold at least for Seifert fibred spaces, see \cite[Questions 1.7 and 1.11]{MV}. We answer these questions in the negative.

It is well known \cite{Eliashberg} that a half Giroux torsion layer along a non-separating torus does not obstruct Stein fillability; note that a non-separating torus can only appear in a 3-manifold with positive first Betti number. In this paper a half torsion layer is always separating unless differently specified.

\begin{teo}
 \label{teo:main}
 There exist Stein fillable contact structures on Seifert fibred spaces which contain separating half Giroux torsion. 
\end{teo}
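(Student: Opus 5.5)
The plan is to produce explicit examples on small Seifert fibred spaces over $S^2$ with three singular fibres. The aim is to exhibit a single contact structure that is simultaneously (a) presented by a Legendrian surgery diagram, hence Stein fillable, and (b) shown to contain a separating half Giroux torsion layer by a decomposition argument.

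First I will fix a Seifert fibred space $M$ that splits along an essential, or at least incompressible, separating torus $T$ into two pieces $M=M_1\cup_T M_2$. A natural choice is $M_1$ a solid torus neighbourhood of a regular fibre's complement piece, or the complement of a torus knot in $S^3$, with $M_2$ a solid torus or a Seifert piece with two exceptional fibres. Then I will build the contact structure by gluing: take tight structures $\xi_1$ on $M_1$ and $\xi_2$ on $M_2$ with convex boundary of prescribed slopes $s_1,s_2$, and insert between them a basic layer $T^2\times[0,1]$ on which the contact planes twist by exactly $\pi$, so that the slopes match $s_1$ and $s_2$. This layer is the half Giroux torsion. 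The twisting must be chosen so that the resulting global structure is still tight; in particular the half twist should be ``absorbed'' by a rotation passing through the Seifert fibre slope. For that reason the normalisation of slopes relative to the fibre slope $\infty$ matters, and I will choose the twisting to contain the fibre direction exactly once.

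Second, I will identify the glued structure with a Legendrian surgery picture. The idea is that a contact structure on a Seifert fibred space over $S^2$ is determined, up to isotopy, by its restrictions to the neighbourhoods of the singular fibres together with the twisting in the complement (Wu, Ghiggini--Lisca--Stipsicz classification style). In the standard surgery diagram, with a central unknot and legs of unknots given by the continued fraction expansions, the region between a leg and the central curve corresponds to a basic slice decomposition. I will choose stabilisations (signs of the basic slices) so that a continued-fraction block encodes a rotation of $\pi$ around $T$. Concretely, I would realise $M$ as Legendrian surgery on a chain where one leg passes through a torus layer whose slopes span a half turn. Every surgery coefficient equal to $\tb-1$ then gives Stein fillability by Eliashberg--Gompf.

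Third, I will certify that the half torsion layer really embeds. In the Legendrian surgery picture this amounts to checking, via the Farey graph, that the basic slices assembled from consecutive legs have matching signs across a path of length covering a full $\pi$-rotation with no shortening possible. Shortening would destroy the torsion, and a mismatch of signs would make the layer not minimally twisting but still present. The main obstacle is precisely this compatibility: one must show that the isotopy class contains a convex torus decomposition realising the $\pi$ twist, rather than a structure merely sharing invariants. I expect to handle it by computing the relative Euler class, or rotation numbers, of the glued structure and matching it with the Stein structure through the Lisca--Matić result that distinct rotation data give non-isotopic structures. This pins down that the Stein fillable structure is the one containing the half torsion layer.
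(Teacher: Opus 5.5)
The first gap is the choice of manifold. A half Giroux torsion layer realises every slope as a linear characteristic foliation, so in a tight manifold its torus $T$ must be incompressible. If $T$ compressed, a Legendrian leaf of the compressing slope would have twisting $0$ with respect to the disc framing and would bound an overtwisted disc. Hence $M_2$ cannot be a solid torus. For the same reason the $\pi$-rotation can never live inside a single leg of the surgery diagram, because a leg only describes a solid torus neighbourhood of one singular fibre.

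In a Seifert fibred space over $S^2$, a separating incompressible torus is vertical with at least two singular fibres on each side, so you need $n\geq 4$. With three singular fibres the only incompressible tori are horizontal, and these are non-separating. Gluing a torus knot complement to a piece with two exceptional fibres is Seifert fibred only if the fibres match. It then has four singular fibres, which is exactly the setting of the paper's $M'=M(-2;\frac{2}{3},\frac{2}{3},\frac{1}{3},\frac{1}{3})$. If the fibres do not match you get a splice as in Min--Varvarezos, not a Seifert fibred space.

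The second, more serious gap is where fillability comes from. Since $T$ is vertical, the layer contains a Legendrian regular fibre with twisting $0$, so the structure must be zero-twisting. You give no mechanism by which a Legendrian realisation of the star-shaped plumbing produces this; on $M_0$ it gives $\zeta$, which is negative-twisting. For the paper's actual example such a diagram cannot exist at all. Since $e_0+\sum r_i=0$, we have $b_1(M')=1$, so any Stein domain obtained by Legendrian surgery on a link in $(S^3,\xi_{\mathrm{std}})$ with boundary $M'$ is simply connected with $b_2^0=1$. But $\eta'$ is planar, and by Etnyre all its fillings have $b_2^0=0$.

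The missing idea is to work with the generalised Lisca--Stipsicz presentation, which contains $1-e_0$ contact $(+1)$-surgeries.
\begin{itemize}
\item It is zero-twisting because the regular fibre is parallel to the core of the planar page.
\item Once it is known to be tight, it contains half convex torsion by Corollary \ref{cor:zero}, since $e_0=-2$.
\item The extra basic slice coming from $\rot(L^3_1)=\rot(L^4_1)=-2$ extends this to a rotative layer from slope $-1$ through $\infty$ to $-\frac{3}{2}$.
\item Fillability is then not automatic. It is proved by rewriting the planar monodromy, which contains the negative twists $N^{-2}$, as a positive factorisation via lantern or daisy substitutions.
\end{itemize}

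Finally, your identification step runs in the wrong direction. Lisca--Mati\'c shows that different $c_1$ give non-isotopic structures. It never shows that matching rotation data forces isotopy: the structures $\ker(\cos(2\pi nz)\,\dd x-\sin(2\pi nz)\,\dd y)$ on $T^3$ are homotopic as plane fields but pairwise non-isotopic. The classification you invoke is also not available in this generality for zero-twisting structures with $n\geq 4$ fibres. The paper needs no identification, because it reads both the torsion and the fillability off the same diagram.
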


For a Seifert fibred space $M:=M(e_0;r_1,...,r_n)$ we fix the convention that $e_0\in\Z$ and $r_i\in(0,1)$ is rational for $i=1,...,n$. We recall that a tight contact structure on $M$ can be either negative- or zero-twisting, see \cite{Honda2}. 

Lisca and Stipsicz \cite{LS.III} showed that when $n=3$ and $e_0=-1$ any zero-twisting contact structure on $M$ is presented by contact surgery on a $T_{5,-5}$ torus link with two coefficients equal to $(+1)$, and the other ones equal to $(-\frac{1}{r_i})$ for $i=1,2,3$. 
Our structures belong to a family obtained by extending their construction.

\begin{figure}[ht]
 \begin{tikzpicture}[scale=0.7]
    \tkzDefPoints{0/0/A, 1.5/1/B, 1.5/-1/D, 3/-1/E, 5/-1/F, 3/1/H, 5/1/I}  \tkzDefPoints{2.25/-0.1/X'', 2.25/0/Y'', 2.25/0.1/Z''}
    \tkzDefPoints{3.5/-1/X, 4.5/-1/Y}\tkzDefPoints{3.9/-1/P, 4/-1/Q, 4.1/-1/R}\tkzDefPoints{3.5/1/X', 4.5/1/Y'}\tkzDefPoints{3.9/1/P', 4/1/Q', 4.1/1/R'}
    \tkzDrawPoints[fill,black,size=1](P,Q,R)\tkzDrawPoints[fill,black,size=1](P',Q',R')\tkzDrawPoints[fill,black,size=1](X'',Y'',Z'')
    \tkzDrawSegment(A,B)\tkzDrawSegment(A,D)\tkzDrawSegment(E,D)\tkzDrawSegment(E,X)\tkzDrawSegment(Y,F)\tkzDrawSegment(B,H)
    \tkzDrawSegment(H,X')\tkzDrawSegment(Y',I)
    \tkzDrawPoints[fill,black,size=5](A,B,D,E,F,H,I)
     \tkzLabelPoint[above left](A){$e_0$} \tkzLabelPoint[above right](B){$m_1^1$}\tkzLabelPoint[above right](H){$m_2^1$}\tkzLabelPoint[above right](I){$m_{k_1}^1$}
     \tkzLabelPoint[below right](D){$m_1^n$}\tkzLabelPoint[below right](E){$m_2^n$}\tkzLabelPoint[below right](F){$m_{k_n}^n$}
\end{tikzpicture}\hspace{1cm}
\begin{tikzpicture}
\begin{scope}[scale=1.5]
\begin{scope}[shift={(0,.2)}]
\draw [domain=-0.5*pi:1.5*pi, scale=0.5] plot (\x, {1.5*sin(\x r)});
\draw [domain=-0.5*pi:1.5*pi, scale=0.5, postaction={decorate}] plot (\x, {-0.4*sin(\x r)-1.9});
\draw [fill=white,white] (1.75,-0.8) rectangle (1.85,-0.9);
\draw [fill=white,white] (1.55,-0.9) rectangle (1.65,-0.98);
\draw [fill=white,white] (1.4,-0.95) rectangle (1.47,-1.1);
\draw [fill=white,white] (1.15,-1) rectangle (1.25,-1.1);
\draw [scale=0.5] (-0.5*pi,-1.5) circle (0.1pt) node[left,scale=0.9]{$\left(-\frac{1}{r_n}\right)$};
\end{scope}
\draw[white] (.75,-1.1) circle (0pt) node[right,black,scale=0.9]{$\vdots$};
\begin{scope}[shift={(.06,-.3)},scale=0.95]
\draw [domain=-0.5*pi:-0.2*pi, scale=0.5] plot (\x, {1.5*sin(\x r)});
\draw [domain=-0.15*pi:1.5*pi, scale=0.5] plot (\x, {1.5*sin(\x r)});
\draw [domain=-0.5*pi:1.5*pi, scale=0.5, postaction={decorate}] plot (\x, {-0.4*sin(\x r)-1.9}) node[right,scale=0.9]{$\left(-\frac{1}{r_2}\right)$};
\draw [fill=white,white] (1.9,-0.8) rectangle (2,-0.9);
\draw [fill=white,white] (1.65,-0.85) rectangle (1.75,-0.98);
\draw [fill=white,white] (1.45,-0.95) rectangle (1.55,-1.1);
\end{scope}
\begin{scope}[shift={(.12,-.6)},scale=0.9]
\draw [domain=-0.5*pi:-0.3*pi, scale=0.5] plot (\x, {1.5*sin(\x r)});
\draw [domain=-0.22*pi:-0.1*pi, scale=0.5] plot (\x, {1.5*sin(\x r)});
\draw [domain=-0.05*pi:1.5*pi, scale=0.5] plot (\x, {1.5*sin(\x r)});
\draw [domain=-0.5*pi:1.5*pi, scale=0.5, postaction={decorate}] plot (\x, {-0.4*sin(\x r)-1.9});
\draw [fill=white,white] (1.85,-0.8) rectangle (1.95,-0.9);
\draw [fill=white,white] (1.6,-0.85) rectangle (1.7,-0.98);
\draw [scale=0.5] (-0.5*pi,-1.5) circle (0.1pt) node[left,scale=0.9]{$\left(-\frac{1}{r_1}\right)$};
\end{scope}
\begin{scope}[shift={(.18,-.9)},scale=0.85]
\draw [domain=-0.5*pi:-0.3*pi, scale=0.5] plot (\x, {1.5*sin(\x r)});
\draw [domain=-0.22*pi:-0.16*pi, scale=0.5] plot (\x, {1.5*sin(\x r)});
\draw [domain=-0.1*pi:0*pi, scale=0.5] plot (\x, {1.5*sin(\x r)});
\draw [domain=0.07*pi:1.5*pi, scale=0.5] plot (\x, {1.5*sin(\x r)});
\draw [domain=-0.5*pi:1.5*pi, scale=0.5, postaction={decorate}] plot (\x, {-0.4*sin(\x r)-1.9});
\draw [fill=white,white] (1.8,-0.8) rectangle (1.9,-0.9);
\end{scope}
\draw[white] (2.1,-1.8) circle (0pt) node[right,black,scale=0.9]{$\left.\begin{aligned}(+1)\\ (+1)\end{aligned}\right\}\ 1-e_0$};
\draw[white] (.75,-2) circle (0pt) node[right,black,scale=0.9]{$\vdots$};
\begin{scope}[shift={(.18,-1.3)},scale=0.85]
\draw [domain=-0.5*pi:-0.2*pi, scale=0.5] plot (\x, {1.5*sin(\x r)});
\draw [domain=-0.135*pi:-0.09*pi, scale=0.5] plot (\x, {1.5*sin(\x r)});
\draw [domain=-0.05*pi:0.01*pi, scale=0.5] plot (\x, {1.5*sin(\x r)});
\draw [domain=0.04*pi:0.19*pi, scale=0.5] plot (\x, {1.5*sin(\x r)});
\draw [domain=0.23*pi:1.5*pi, scale=0.5] plot (\x, {1.5*sin(\x r)});
\draw [domain=-0.5*pi:1.5*pi, scale=0.5, postaction={decorate}] plot (\x, {-0.4*sin(\x r)-1.9});
\end{scope}
\end{scope}
\end{tikzpicture}
\caption{\smaller[1]{Generalised Lisca-Stipsicz contact structures on $M(e_0;r_1,...,r_n)$ when $e_0\leq1$. We have that the smooth framings are determined by the continued fraction expansion $-\frac{1}{r_i}=m_1^i-\dfrac{1}{\ddots-\frac{1}{m_{k_i}^i}}=[m_1^i,...,m_{k_i}^i]$ where $m_j^i\leq-2$ for $j=1,...,k_i$ and $i=1,...,n$.}}
\label{fig:SFS}
\end{figure}

Suppose that $e_0\leq1$, we say that a contact structure on $M$ is a \emph{generalised Lisca-Stipsicz structure} when it can be written with a surgery presentation as in Figure \ref{fig:SFS}; namely, the number of $(+1)$-contact surgeries is $1-e_0$, while for each singular fibre we have contact $(-\frac{1}{r_i})$-surgery. Recall that such a diagram gives a family of contact structures, whose elements can be specified by replacing the rational surgery with a Legendrian surgery along a chain containing $k_i$ unknots. The $\tb$-numbers are determined by the continued fraction expansion; for the leading component $L^i_1$ it equals $m_1^i$, while on the remaining unknots $L^i_j$ in the chain, is given by $(m^i_2+1,...,m^i_{k_i}+1)$ for $i=1,...,n$ and $j>1$.
The rotation numbers are chosen arbitrarily in the set $\{\tb(L^i_j)+1,\tb(L^i_j)+3,\dots,-\tb(L^i_j)-1\}$.

Note that this family could be easily extended to the case when $e_0>1$, as done in \cite{GLS} for three legs, but we do not write it here because it is not needed for our purpose. Additionally, these structures are all supported by planar open books; in the paper we sketch how to translate a surgery presentation as in Figure \ref{fig:SFS} into an open book of this kind.

When $n>3$ not every zero-twisting tight structure is generalised Lisca-Stipsicz, as seen from the examples constructed by Ghiggini in \cite{G4}, but the converse still holds.

\begin{prop}
 \label{prop:zero}
 Every generalised Lisca-Stipsicz tight contact structure on $M(e_0;r_1,...,r_n)$ with $e_0\leq1$ is zero-twisting.   
\end{prop}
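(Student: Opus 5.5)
We have to show that a tight generalised Lisca-Stipsicz structure $\xi$ on $M=M(e_0;r_1,\dots,r_n)$, $e_0\leq 1$, admits a Legendrian regular fibre with twisting number $0$, measured with respect to the fibration framing. The plan is to exhibit such a fibre directly in the surgery diagram of Figure \ref{fig:SFS}.

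\emph{Step 1: identify a regular fibre in the diagram.} Take the Legendrian unknot $U$ that is a meridian of the torus link, i.e.\ a small Legendrian push-off linking each component of $T_{n+2-e_0,-(n+2-e_0)}$ once. This is the standard picture: the torus link components are parallel copies of a single fibre of the Hopf fibration of $S^3$, and surgery on them produces the Seifert fibration. Concretely, view $T_{k,-k}$, $k=n+1-e_0$, as $k$ Hopf fibres of $S^3$. The $(-1/r_i)$ surgeries create the singular fibres, and the $(+1)$ surgeries adjust $e_0$. A further Hopf fibre disjoint from all components, or equivalently a meridional curve of the link, then becomes a regular fibre of $M$. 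The check needed here is a Kirby-calculus one: blow down or rationally identify the diagram with the standard plumbing graph on the left of Figure \ref{fig:SFS}, keeping track of where $U$ goes.

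\emph{Step 2: compute its twisting.} Realise $U$ as a Legendrian curve in the diagram that stays disjoint from the surgery components. A natural choice is a Legendrian push-off of one of the $(+1)$-surgered components, or a Legendrian Hopf fibre with $\tb=-1$ in $S^3$. Then compute the contact framing of $U$ in $M$ relative to the fibre framing. The fibre framing is read off from the surgery: after surgery, the framing induced on a parallel copy is shifted by the linking numbers with the surgered components. Using the standard formula for the framing change of a knot in the complement of a surgery link (linking matrix $Q$, linking vector $\ell$), we need to show that the difference $\tb_{\text{new}}-\text{fibre framing}$ equals $0$. For a push-off $U$ of a $(+1)$-surgered Legendrian $L$ with $\tb(L)=-1$, the contact framing of $U$ agrees with that of $L$. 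After $(+1)$ contact surgery, that framing coincides with the surgery slope $\tb(L)+1=0$ relative to the Seifert framing of $L$, and the meridional disc structure makes $U$ isotopic to the core fibre. This gives twisting $0$ up to the correction coming from how the Seifert framing of $L$ relates to the product framing of the Hopf fibration ($-1$ per Hopf fibre). I would set up this bookkeeping carefully so that the corrections from the $(+1)$ and $(-1/r_i)$ surgeries cancel.

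\emph{Step 3: conclude.} By Honda's dichotomy \cite{Honda2}, a tight structure is zero-twisting exactly when the maximal twisting of a Legendrian regular isotopic to a regular fibre is $0$. Twisting cannot be positive in a tight structure without producing overtwistedness, so exhibiting a twisting-$0$ fibre suffices. Tightness is a hypothesis and is only used here.

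The main obstacle is Step 2, the precise framing bookkeeping: identifying the fibre framing of the chosen Legendrian curve in the surgered manifold and checking that it agrees with the contact framing. I would first do it for the Lisca-Stipsicz case $n=3$, $e_0=-1$, to calibrate signs against their known zero-twisting conclusion \cite{LS.III}. I would then observe that adding legs or $(+1)$ components changes both framings by the same amount, which gives the general case.
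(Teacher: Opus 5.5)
Your Step 1 picks the right curve, and it is the one the paper uses: the regular fibre is one more parallel component of the torus link, i.e.\ one more Legendrian push-off $U$ of the $\tb=-1$ unknot. Calling it a ``meridian of the link'' is inaccurate. The gap is Step 2, which is the entire content of the proposition. You postpone it to a ``careful bookkeeping'' of corrections that are supposed to cancel. The one concrete claim you make there is also wrong. The contact framing of $L$, and of its push-off $U$, is $\tb(L)=-1$ with respect to the Seifert framing. The value $\tb(L)+1=0$ is the smooth coefficient of the contact $(+1)$-surgery on $L$, not a framing of $U$. Knowing that $U$ is isotopic to the core of the surgery torus does not determine the fibration framing of $U$ either. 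A linking-matrix formula for framings in the surgered manifold is beside the point.

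The missing idea is locality. The twisting number of $U$ is the difference of two framings of the same knot: the contact framing and the fibration framing. Both are determined inside a tubular neighbourhood $N(U)$ disjoint from the surgery link. The surgeries never touch $N(U)$, so there is nothing to correct and nothing has to cancel.

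What you need is only that the Seifert fibration of $M$, restricted to $N(U)$, is the fibration by parallel copies of $U$. This holds because of the Kirby moves relating Figure \ref{fig:SFS} to the plumbing graph: $1-e_0$ positive blow-ups along meridians of the central vertex, after which it has framing $+1$ and is blown down. These moves send meridians of the central vertex, which are regular fibres with mutual linking $0$, to parallel components of the torus link with mutual linking $-1$.

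Now compute in $(S^3,\xi_{st})$. The Legendrian push-off of $U$ is a further parallel component, so the contact framing is $\tb(U)=-1$. The fibration push-off is also a parallel component with linking number $-1$ with $U$. The framings agree and the twisting is $0$.

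The paper phrases exactly this with the planar open book: $U$ and its fibration push-off are both curves parallel to the core of the annulus on the page, and the page framing equals the contact framing. Your final remark that adding legs or $(+1)$ components ``changes both framings by the same amount'' is this locality statement. It has to be justified as such, not by calibrating against the $n=3$ case of Lisca--Stipsicz.
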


Note that for a contact structure being as in Figure \ref{fig:SFS} does not guarantee tightness, as it is well known from \cite{GLS-1,M.t} already in the $n=3$ case. 

We recall that a slightly different version of half Giroux torsion exists in literature; see \cite[Definition 2.3]{MV}. A \emph{half convex Giroux torsion} layer is an embedded $\pi$-twisting tight thickened torus with convex boundaries of the same slope and division number one. 
Such a concept is better aligned with convex decompositions and therefore usually easier to understand in Seifert fibred spaces; in addition, its relations with Stein fillability have been already studied by other authors\footnote{In \cite{SS} Shah and Simone already affirm the existence of a Stein fillable structure on a Seifert fibred space $Y$ of the form $M(-4;r_1,...,r_4)$, which should contain half (convex) Giroux torsion. They refer to a previous paper of Shah \cite{Shah}, but this seems to contain no justification for $Y$ to have such a torsion layer. 
Furthermore, we need to highlight that it now follows from our recent result in \cite[Theorem 1.1]{CM-negative} that every fillable structure on a negative-definite Seifert fibred space, such as $Y$, is necessarily negative-twisting; on the other hand, convex surface theory implies that the presence of a half convex Giroux torsion layer forces a tight contact structure to be zero-twisting.}.
We recall the following result from \cite[Lemma 2.3]{ACM}.

\begin{teo}[Alfieri-Cavallo-Matkovi\v c]
 \label{teo:ours}
 If $M(e_0;r_1,...,r_n)$ carries a zero-twisting tight contact structure without half convex Giroux torsion then $e_0\geq-1$.   
\end{teo}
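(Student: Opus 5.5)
The plan is to use the zero-twisting hypothesis to reduce the contact structure to an $S^1$-invariant one on the complement of the singular fibres. Then I would read off $e_0$ from the combinatorics of the dividing set on the base. Finally, I would show that $e_0\leq-2$ forces a configuration that produces a half convex Giroux torsion layer.

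\textbf{Step 1: reduction to an $S^1$-invariant piece.} Let $V_1,\dots,V_n$ be standard neighbourhoods of the singular fibres. Write $M\setminus\bigcup_i V_i=\Sigma\times S^1$, where $\Sigma$ is an $n$-punctured sphere. Choose the section so that the whole Euler number $e_0$ is recorded in the gluing map of one chosen torus, say $\partial V_1$. Since $\xi$ is zero-twisting, there is a Legendrian regular fibre with twisting $0$ with respect to the fibration framing. By Legendrian realisation and isotopies of the $V_i$, I would arrange that every $\partial V_i$ is convex with vertical dividing curves, and that the number of dividing curves is minimised, hence equal to $2$ by tightness.

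Next take vertical annuli between the boundary tori whose Legendrian boundaries have twisting $0$. Their dividing curves are closed and, by tightness, vertical. Giroux flexibility then makes $\xi|_{\Sigma\times S^1}$ isotopic to an $S^1$-invariant structure, determined by a dividing multicurve $\Gamma\subset\Sigma$. This $\Gamma$ has no closed homotopically trivial component and meets each boundary circle in $2$ points. Closed components that are parallel to a boundary circle, or that separate boundary circles, would be dealt with separately: each of them yields a $\pi$-twisting vertical layer, that is, a half convex torsion layer, so by hypothesis they are absent. Thus $\Gamma$ consists of $n$ arcs pairing the $2n$ endpoints.

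\textbf{Step 2: translating $e_0$ into slopes.} Express the vertical slope in the meridional coordinates of each $V_i$. For $i\geq2$ this is determined by $r_i$ alone; for $V_1$ it is shifted by $e_0$. The requirement that each $V_i$ with this boundary slope and two dividing curves be tight, together with its relative Euler class, gives a constraint. I would compare this with the Euler class computed from $\Gamma$, using the formula $\chi(\Sigma_+)-\chi(\Sigma_-)$ on $\Sigma$. When $e_0\leq-2$, the vertical slope on $\partial V_1$ lies more than one full unit of $e_0$ beyond the range realisable by a minimally twisting neighbourhood. Consequently $\xi$ restricted to some $V_i$, or a collar around it, must contain a thickened torus in which the dividing slope rotates by $\pi$ from vertical back to vertical. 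Concretely, I expect the combinatorics to force a boundary-parallel arc of $\Gamma$ next to some puncture. That arc gives a bypass, and attaching it to $\partial V_i$ and then pushing across the $S^1$-direction produces a tight $\pi$-twisting $T^2\times I$ with both boundaries of vertical slope and two dividing curves. This is exactly a half convex Giroux torsion layer, so the contrapositive yields the claim.

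\textbf{Main obstacle.} The delicate step is the counting in Step 2. I must show precisely that $e_0\leq-2$, and not merely $e_0\leq-1$, is incompatible with an arc system without boundary-parallel or torsion configurations. I also need the resulting $\pi$-twisting layer to be genuinely tight and embedded rather than merely virtually present. My first approach would be to cut $\Sigma$ along the arcs of $\Gamma$ and use the Euler characteristic of $\Sigma_\pm$ to bound the sum of the slope shifts $\sum_i\lceil\cdot\rceil$ by $n-1$. Since the twisting contributions of the $V_i$ total at most $n-2+$ one extra unit, this should give $e_0\geq-1$. Tightness of the layer would follow because it embeds in the tight manifold $(M,\xi)$.
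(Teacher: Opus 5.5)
There is a genuine gap, and it is in Step 2, which carries the whole content of the statement.

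\textbf{The proposed mechanism cannot work.} A boundary-parallel arc of $\Gamma$ at a single puncture gives one bypass, and attaching one bypass only produces a basic slice, with rotation less than $\pi$. More seriously, any $\pi$-twisting layer parallel to $\partial V_i$ lies in the solid torus bounded by its outer boundary, namely $V_i$ union the layer. It would therefore contain a convex torus of meridional slope, and hence an overtwisted disc. So in a tight structure a half convex torsion layer must separate the exceptional fibres into two groups, each side containing at least two of them.

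\textbf{Step 2 has no real content.} The sentence about the vertical slope on $\partial V_1$ lying ``beyond the range realisable'' is not meaningful: every non-meridional boundary slope is realised by a tight solid torus, so no single $V_i$ detects $e_0$. The Euler class comparison also yields no inequality. The Euler class computed from $\Gamma$ and the relative Euler classes of the $V_i$ are two computations of the same class. Tightness is detected instead by where slopes sit relative to meridians.

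\textbf{What you are missing.} Once all $\partial V_i$ are made vertical, the deciding information lives inside the $V_i$, in the signs of their basic slices, and your plan never uses these signs. The argument the paper recalls from \cite[Lemma 2.3]{ACM} runs as follows.
\begin{enumerate}
\item Decompose each $V_i$ into a core neighbourhood plus basic slices; the outermost slice runs from slope $-1$ to $\infty$.
\item Sort the exceptional fibres by the sign of their outermost slice, and connect the same-signed outermost slices.
\item This yields a vertical torus $T$ of slope $\infty$ separating the positive fibres from the negative ones, with a thickened torus from $-1$ to $\infty$ on each side.
\item Because of the orientation reversal and the $e_0$ normalisation, a slope $s$ on one side of $T$ reads as $e_0-s$ on the other.
\item The union is therefore a rotative layer from $-1$ through $\infty$ to $e_0+1$. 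For $e_0\leq-2$ it rotates by at least $\pi$ and contains a half convex Giroux torsion layer.
\end{enumerate}
If all outermost slices have the same sign there is no such $T$, and that case has to be excluded by a separate tightness argument. The sign splitting and the slope identification across $T$ are the missing ideas. Your proposed count of $\lceil\cdot\rceil$ shifts against $\chi(\Sigma_\pm)$ does not replace them.
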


Proposition \ref{prop:zero} and Theorem \ref{teo:ours} imply the following corollary.

\begin{cor}
 \label{cor:zero}
 Every generalised Lisca-Stipsicz tight structure on $M(e_0;r_1,...,r_n)$ with $e_0\leq-2$ necessarily contains a half convex Giroux torsion layer. 
\end{cor}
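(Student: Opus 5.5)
The corollary follows by combining Proposition \ref{prop:zero} with Theorem \ref{teo:ours} through a contrapositive argument.

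Let $\xi$ be a generalised Lisca-Stipsicz structure on $M=M(e_0;r_1,\dots,r_n)$ with $e_0\leq-2$, and assume $\xi$ is tight. Since $e_0\leq-2\leq1$, the hypothesis of Proposition \ref{prop:zero} is satisfied, so $\xi$ is zero-twisting.

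Now suppose, for contradiction, that $\xi$ contains no half convex Giroux torsion layer. Then $\xi$ is a zero-twisting tight contact structure on $M(e_0;r_1,\dots,r_n)$ without half convex Giroux torsion. Theorem \ref{teo:ours} then forces $e_0\geq-1$, contradicting $e_0\leq-2$. Hence $\xi$ must contain a half convex Giroux torsion layer.

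No step is a real obstacle once Proposition \ref{prop:zero} is available; the substantive work lies in that proposition. Two points need checking.

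\begin{enumerate}
\item The normalisations must agree: both results use the convention $e_0\in\Z$ and $r_i\in(0,1)$ fixed in the introduction.
\item The notion of zero-twisting must be the same in both: the maximal twisting number of a Legendrian isotopic to a regular fibre, measured relative to the fibration framing, equals $0$. Theorem \ref{teo:ours} applies under this definition.
\end{enumerate}

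It is also worth noting that $n$ plays no role: Theorem \ref{teo:ours} is stated for arbitrarily many singular fibres, so no case split in $n$ is needed.
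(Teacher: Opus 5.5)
Your proof is correct and is exactly the paper's argument. The paper derives the corollary directly from Proposition \ref{prop:zero}: tightness together with $e_0\leq1$ gives zero-twisting. It then applies the contrapositive of Theorem \ref{teo:ours}: a zero-twisting tight structure with $e_0\leq-2$ must contain half convex Giroux torsion.
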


\begin{figure}[ht] 
\begin{tikzpicture}[scale=1] 
      \tkzDefPoints{0/0/A, 1.5/1.5/B, 1.5/-1.5/D, 1.5/0.5/C, 1.5/-0.5/G, 0/-3/O}      
       \tkzDrawSegment(A,B)\tkzDrawSegment(A,D)\tkzDrawSegment(A,C)\tkzDrawSegment(A,G)
    \tkzDrawPoints[fill,black,size=5](A,B,C,D,G)\tkzDrawPoints[fill,white,size=5](O)
     \tkzLabelPoint[above left](A){$-2$} \tkzLabelPoint[above right](B){$-2$}\tkzLabelPoint[above right](C){$-2$}
     \tkzLabelPoint[above right](D){$-2$}\tkzLabelPoint[above right](G){$-2$}
\end{tikzpicture}\hspace{3cm}
\begin{tikzpicture}
\begin{scope}[scale=1.9]
\begin{scope}[shift={(-.06,.6)}, scale=1.05]
\draw [domain=-0.5*pi:0.5*pi, scale=0.5] plot (\x, {1.5*sin(\x r)});
\draw [domain=0.5*pi:1.48*pi, scale=0.5] plot (\x, {1.3*sin(\x r)+0.2});
\draw [domain=-0.5*pi:1.5*pi, scale=0.5] plot (\x, {-0.6*sin(\x r)-2.1});
\begin{scope}[shift={(2.25,-.59)},scale=.12]
\draw [domain=-0.5*pi:0.5*pi, scale=0.5] plot (\x, {0.8*sin(\x r)-.1});
\draw [domain=-0.5*pi:0.5*pi, scale=0.5] plot (\x, {-0.8*sin(\x r)-1.8});
\end{scope}
\draw [shift={(.27,.07)}, fill=white,white] (1.75,-0.85) rectangle (1.8,-0.9);
\draw [shift={(-.1,-.12)}, fill=white,white] (1.75,-0.85) rectangle (1.8,-0.9);
\draw [shift={(-.3,-.22)}, fill=white,white] (1.75,-0.85) rectangle (1.8,-0.9);
\draw [shift={(-.45,-.3)}, fill=white,white] (1.75,-0.85) rectangle (1.8,-0.9);
\draw [shift={(-.55,-.37)}, fill=white,white] (1.75,-0.85) rectangle (1.8,-0.9);
\draw [shift={(-.68,-.43)}, fill=white,white] (1.75,-0.85) rectangle (1.8,-0.9);
\end{scope}

\begin{scope}[shift={(0,.2)}]
\draw [domain=-0.5*pi:-0.27*pi, scale=0.5] plot (\x, {1.5*sin(\x r)});
\draw [domain=-0.22*pi:0.5*pi, scale=0.5] plot (\x, {1.5*sin(\x r)});
\draw [domain=0.5*pi:1.48*pi, scale=0.5] plot (\x, {1.3*sin(\x r)+0.2});
\draw [domain=-0.5*pi:1.5*pi, scale=0.5] plot (\x, {-0.5*sin(\x r)-2});
\begin{scope}[shift={(2.25,-.59)},scale=.12]
\draw [domain=-0.5*pi:0.5*pi, scale=0.5] plot (\x, {0.8*sin(\x r)-.1});
\draw [domain=-0.5*pi:0.5*pi, scale=0.5] plot (\x, {-0.8*sin(\x r)-1.8});
\end{scope}
\draw [shift={(.11,.01)}, fill=white,white] (1.75,-0.85) rectangle (1.8,-0.9);
\draw [shift={(-.12,-.1)}, fill=white,white] (1.75,-0.85) rectangle (1.8,-0.9);
\draw [shift={(-.3,-.18)}, fill=white,white] (1.75,-0.85) rectangle (1.8,-0.9);
\draw [shift={(-.39,-.22)}, fill=white,white] (1.75,-0.85) rectangle (1.8,-0.9);
\draw [shift={(-.52,-.26)}, fill=white,white] (1.75,-0.85) rectangle (1.8,-0.9);
\end{scope}

\begin{scope}[shift={(.06,-.2)},scale=0.95]
\draw [domain=-0.48*pi:-0.34*pi, scale=0.5] plot (\x, {1.3*sin(\x r)+.2});
\draw [domain=-0.28*pi:-0.2*pi, scale=0.5] plot (\x, {1.3*sin(\x r)+.2});
\draw [domain=-0.15*pi:.5*pi, scale=0.5] plot (\x, {1.3*sin(\x r)+.2});
\draw [domain=0.5*pi:1.5*pi, scale=0.5] plot (\x, {1.5*sin(\x r)});
\draw [domain=-0.5*pi:1.5*pi, scale=0.5] plot (\x, {-0.4*sin(\x r)-1.9});
\begin{scope}[shift={(-.67,-.59)},scale=.12]
\draw [domain=-0.5*pi:0.5*pi, scale=0.5] plot (\x, {-0.8*sin(\x r)-.1});
\draw [domain=-0.5*pi:0.5*pi, scale=0.5] plot (\x, {0.8*sin(\x r)-1.8});
\end{scope}
\draw [shift={(.09,.02)}, fill=white,white] (1.75,-0.85) rectangle (1.8,-0.9);
\draw [shift={(-.12,-.06)}, fill=white,white] (1.75,-0.85) rectangle (1.8,-0.9);
\draw [shift={(-.23,-.09)}, fill=white,white] (1.75,-0.85) rectangle (1.8,-0.9);
\draw [shift={(-.34,-.12)}, fill=white,white] (1.75,-0.85) rectangle (1.8,-0.9);
\end{scope}

\begin{scope}[shift={(.12,-.6)},scale=0.9]
\draw [domain=-0.48*pi:-0.32*pi, scale=0.5] plot (\x, {1.3*sin(\x r)+.2});
\draw [domain=-0.25*pi:-0.16*pi, scale=0.5] plot (\x, {1.3*sin(\x r)+.2});
\draw [domain=-0.12*pi:-0.06*pi, scale=0.5] plot (\x, {1.3*sin(\x r)+.2});
\draw [domain=-0.02*pi:0.5*pi, scale=0.5] plot (\x, {1.3*sin(\x r)+.2});
\draw [domain=0.5*pi:1.5*pi, scale=0.5] plot (\x, {1.5*sin(\x r)});
\draw [domain=-0.5*pi:1.5*pi, scale=0.5] plot (\x, {-0.4*sin(\x r)-1.9});
\begin{scope}[shift={(-.67,-.59)},scale=.12]
\draw [domain=-0.5*pi:0.5*pi, scale=0.5] plot (\x, {-0.8*sin(\x r)-.1});
\draw [domain=-0.5*pi:0.5*pi, scale=0.5] plot (\x, {0.8*sin(\x r)-1.8});
\end{scope}
\draw [shift={(.14,.04)}, fill=white,white] (1.75,-0.85) rectangle (1.8,-0.9);
\draw [shift={(-.12,-.06)}, fill=white,white] (1.75,-0.85) rectangle (1.8,-0.9);
\draw [fill=white,white] (1.75,-0.85) rectangle (1.8,-0.9);
\end{scope}

\begin{scope}[shift={(.18,-.9)},scale=0.85]
\draw [domain=-0.5*pi:-0.28*pi, scale=0.5] plot (\x, {1.5*sin(\x r)});
\draw [domain=-0.22*pi:-0.12*pi, scale=0.5] plot (\x, {1.5*sin(\x r)});
\draw [domain=-0.08*pi:-0.02*pi, scale=0.5] plot (\x, {1.5*sin(\x r)});
\draw [domain=0.02*pi:0.08*pi, scale=0.5] plot (\x, {1.5*sin(\x r)});
\draw [domain=0.12*pi:1.5*pi, scale=0.5] plot (\x, {1.5*sin(\x r)});
\draw [domain=-0.5*pi:1.5*pi, scale=0.5] plot (\x, {-0.4*sin(\x r)-1.9}) node[right,black,scale=0.9]{$(+1)$};
\draw [shift={(.22,.06)}, fill=white,white] (1.75,-0.85) rectangle (1.8,-0.9);
\draw [shift={(.06,.01)},fill=white,white] (1.75,-0.85) rectangle (1.8,-0.9);
\end{scope}
\begin{scope}[shift={(.18,-1.1)},scale=0.85]
\draw [domain=-0.5*pi:-0.3*pi, scale=0.5] plot (\x, {1.5*sin(\x r)});
\draw [domain=-0.25*pi:-0.18*pi, scale=0.5] plot (\x, {1.5*sin(\x r)});
\draw [domain=-0.135*pi:-0.045*pi, scale=0.5] plot (\x, {1.5*sin(\x r)});
\draw [domain=-0*pi:0.06*pi, scale=0.5] plot (\x, {1.5*sin(\x r)});
\draw [domain=0.1*pi:0.17*pi, scale=0.5] plot (\x, {1.5*sin(\x r)});
\draw [domain=0.21*pi:1.5*pi, scale=0.5] plot (\x, {1.5*sin(\x r)});
\draw [domain=-0.5*pi:1.5*pi, scale=0.5] plot (\x, {-0.4*sin(\x r)-1.9}) node[right,black,scale=0.9]{$(+1)$};
\draw [shift={(.22,.06)}, fill=white,white] (1.75,-0.85) rectangle (1.8,-0.9);
\end{scope}
\begin{scope}[shift={(.18,-1.3)},scale=0.85]
\draw [domain=-0.5*pi:-0.3*pi, scale=0.5] plot (\x, {1.5*sin(\x r)});
\draw [domain=-0.25*pi:-0.2*pi, scale=0.5] plot (\x, {1.5*sin(\x r)});
\draw [domain=-0.135*pi:-0.09*pi, scale=0.5] plot (\x, {1.5*sin(\x r)});
\draw [domain=-0.05*pi:0.04*pi, scale=0.5] plot (\x, {1.5*sin(\x r)});
\draw [domain=0.07*pi:0.15*pi, scale=0.5] plot (\x, {1.5*sin(\x r)});
\draw [domain=0.18*pi:0.27*pi, scale=0.5] plot (\x, {1.5*sin(\x r)});
\draw [domain=0.32*pi:1.5*pi, scale=0.5] plot (\x, {1.5*sin(\x r)});
\draw [domain=-0.5*pi:1.5*pi, scale=0.5] plot (\x, {-0.4*sin(\x r)-1.9}) node[right,black,scale=0.9]{$(+1)$};
\end{scope}
\end{scope}
\end{tikzpicture}
       \caption{\smaller[1]{The standard plumbing graph for the torus bundle $M_0=M(-2;\frac{1}{2},\frac{1}{2},\frac{1}{2},\frac{1}{2})$ (left), and the generalised Lisca-Stipsicz surgery presentation of $\eta$ (right).}}
     \label{bundle}\end{figure}

The simplest Stein fillable example is on the Seifert fibred torus bundle $M_0:=M(-2;\frac{1}{2},\frac{1}{2},\frac{1}{2},\frac{1}{2})$, see Figure \ref{bundle} (left); from the classification of Giroux \cite{Giroux2} and Honda \cite{Honda2} we know that this manifold has exactly two Stein fillable contact structures, up to contactomorphism. A negative-twisting structure, which we denote by $\zeta$ and was the one we considered in \cite[Section 6]{CM}, and a structure which is instead zero-twisting and we denote it by $\eta$. The contact 3-manifold $(M_0,\zeta)$ bounds the Stein domain $(W,J)$ given by Legendrianise the plumbing graph of $M_0$, and thus $b_2^0(W)=1$; from \cite[Theorem 4.1]{Etnyre} this means that $\zeta$ is not planar. Conversely, we show that $\eta$ is a generalised Lisca-Stipsicz structure whose surgery presentation is in Figure \ref{bundle} (right), and then it is planar. It is known \cite[Subsection 1.6]{LvHMW} that $(M_0,\eta)$ has the cotangent bundle over the Klein bottle $(D^*K,\dd\lambda_\text{can})$ as a Stein filling; an easy computation gives $\chi(D^*K)=0$ and $b_2(D^*K)=0$ in this case.

\begin{prop}
 The contact structure $\eta$ on $M_0$ is generalised Lisca-Stipsicz. Furthermore, it is supported by a planar open book whose monodromy has positive factorisation.
\end{prop}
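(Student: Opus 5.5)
Since Figure \ref{bundle} (right) is a generalised Lisca-Stipsicz diagram on $M_0$, the first claim amounts to identifying the structure it presents, which I will call $\eta'$, with $\eta$. For $M(-2;\frac12,\frac12,\frac12,\frac12)$ we have $-\frac1{r_i}=-2=[m^i_1]$ with $k_i=1$, so each singular fibre contributes one Legendrian unknot with $\tb=-2$ and forced rotation number $\pm1$. There are $1-e_0=3$ contact $(+1)$-surgeries on the linked unknots of the torus link, exactly as drawn. I will proceed in three steps.

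\textbf{Step 1: smooth type.} I will check that the diagram is a surgery presentation of $M_0$. The plan is to convert the contact framings to smooth ones ($\tb+1$ on the $(+1)$-components and $\tb-1$ on the others), slam-dunk or blow down the $(+1)$-components against a central unknot, and reach the star-shaped plumbing of Figure \ref{bundle} (left). This is the same Kirby-calculus argument as in the three-legged case of Lisca-Stipsicz \cite{LS.III}, now with four legs.

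\textbf{Step 2: $\eta'$ is tight, zero-twisting and Stein fillable.} To get fillability and the second claim together, I will turn the diagram into a planar open book. I stabilise each $(-\frac1{r_i})$-unknot onto a page of the annular or multiholed-disc open book of the standard $S^3$, in which the torus link sits as boundary-parallel curves. Each $(+1)$-surgery adds a negative Dehn twist and each Legendrian surgery adds a positive one. This gives a monodromy on a planar surface, a sphere with holes, that is a product of twists around boundary-parallel and interior curves.

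Using lantern relations, each negative twist is then cancelled against positive twists coming from the four legs. I expect this to be the main obstacle, because it requires explicitly finding the right lantern substitutions. What I would try first: with four legs and three negative twists, group the boundary twists into three lanterns, each absorbing one negative twist, and check that the word that remains is a product of positive Dehn twists.

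Once this works, the open book is planar with positive monodromy, so $\eta'$ is Stein fillable and in particular tight. Proposition \ref{prop:zero} then shows that $\eta'$ is zero-twisting.

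\textbf{Step 3: identify $\eta'$ with $\eta$.} By the Giroux--Honda classification quoted above, $M_0$ has exactly two Stein fillable structures up to contactomorphism: the negative-twisting $\zeta$ and the zero-twisting $\eta$. Since $\eta'$ is Stein fillable and zero-twisting, $\eta'\cong\eta$, which proves the first claim. The same argument rules out $\eta'=\zeta$ in a second way, since $\zeta$ is not planar \cite{Etnyre} whereas $\eta'$ is. The positive factorisation built in Step 2 gives the second claim.

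As a consistency check, I will compute the Lefschetz fibration coming from the positive factorisation and verify that $\chi=0$ and $b_2=0$, matching $D^*K$.
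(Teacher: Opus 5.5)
Your route is the paper's: turn the diagram of Figure \ref{bundle} (right) into a planar open book, find a positive factorisation by lantern relations, and then identify the resulting Stein fillable, zero-twisting (or planar) structure with $\eta$ via the classification on $M_0$. Your Step~3 is exactly the paper's identification, and Step~1 is implicit in the definition of a generalised Lisca--Stipsicz structure.

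The gap is Step~2, which is the whole content of the proof. You do not produce the factorisation, and the plan you sketch for it is miscounted. Of the $1-e_0=3$ contact $(+1)$-surgeries, the first (on the $\tb=-1$ unknot) only turns the positive Hopf twist of the annular page into the identity. So the monodromy contains $N^{-2}$, with $N$ the core of the annulus, not three negative twists. Exactly two lanterns are needed, not three.

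What makes the two lanterns exist is a pairing of legs of opposite signs, which your phrase ``forced rotation number $\pm1$'' leaves out. In Figure \ref{bundle} two leading unknots are stabilised positively and two negatively. The page is therefore a disc with five holes: the inner boundary of the annulus, two positive stabilisation holes between it and $N$, and two negative ones between $N$ and the outer boundary. The monodromy can be written as $N^{-2}pPrRqQsS$, where:
\begin{itemize}
\item $p,q,r,s$ are boundary-parallel twists around the stabilisation holes;
\item $P$, belonging to the positive hole of $p$, encircles the inner boundary and the other positive hole, but not the hole of $p$;
\item $R$, belonging to the negative hole of $r$, encircles the inner boundary, both positive holes and the hole of $r$;
\item $N$ is disjoint from all the other curves.
\end{itemize}
Then $p,P,r,R$ bound a four-holed sphere in which $N$ is one of the three interior lantern curves. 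Hence $N^{-1}pPrR=aA$ for the other two lantern curves $a,A$, and likewise $N^{-1}qQsS=bB$. Since $N$ commutes with everything, the monodromy equals $aAbB$.

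If all four stabilisations had the same sign, $N$ would be parallel to a boundary component of the page and these lanterns would not be available. So the balanced choice of rotation numbers drawn in the figure is essential. Once you have $aAbB$ on a page with five holes, your consistency check $\chi=1-5+4=0$ goes through.
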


From this result it follows that $\eta$ itself contains half convex Giroux torsion; furthermore, any Seifert fibred space $M$ whose standard plumbing graph contains the one of $M_0$ in Figure \ref{bundle} admits a Stein fillable structure with separating half convex Giroux torsion, obtained by Legendrian surgery along some regular or singular fibres of $(M_0,\eta)$.

In order to obtain an actual half Giroux torsion layer it suffices to expand the convex Giroux torsion layer from the proof of \cite[Lemma 2.3]{ACM} by a basic slice, which exists once two same-signed unknots have an additional stabilisation of the same sign. We then need to correct the remaining legs to ensure fillability. The simplest family which can be obtained in this way consists of Seifert fibred spaces whose standard plumbing graph contains the graph of $M(-2;\frac{2}{3},\frac{2}{3},\frac{1}{3},\frac{1}{3})$, see Figure \ref{fig:3} (left); nonetheless, we can give a more general result. 

We denote the Seifert fibred surface bundle $M(-k;s_1,...,s_k,1-s_1,...,1-s_k)$ by $M(s_1,...,s_k)$ for any $s_i\in(0,1)\cap\Q$ with $i=1,...,k$.

\begin{teo}
 \label{teo:convex}
 There are Stein fillable structures with half convex Giroux torsion on any Seifert fibred space $M=M(-k;r_1,...,r_n)$ where $2\leq k\leq\frac{n}{2}$ whose standard graph has the one of $M(s_1,...,s_k)$ as a subgraph. Furthermore, when no $s_i$ is equal to $\frac{1}{2}$ these structures also contain half Giroux torsion.    
\end{teo}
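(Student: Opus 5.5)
The plan is to build an explicit generalised Lisca--Stipsicz structure on $M(s_1,\dots,s_k)$ that is Stein fillable. Then we transfer it to $M$ and detect the torsion through Corollary~\ref{cor:zero} and the proof of \cite[Lemma 2.3]{ACM}.

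\emph{Step 1: the building block.} Since $e_0=-k\leq-2$, Figure~\ref{fig:SFS} uses $1+k$ contact $(+1)$-surgeries on the torus link, together with $2k$ legs carrying contact $(-\frac{1}{s_i})$- and $(-\frac{1}{1-s_i})$-surgeries. The continued fractions of $-\frac1{s_i}$ and $-\frac1{1-s_i}$ are Riemenschneider dual. I will choose the stabilisations on each dual pair of legs so that they mirror each other: the stabilisations on the leg of $s_i$ all have one sign, and those on the leg of $1-s_i$ have the opposite sign.

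I then convert the diagram into a planar open book. This uses the translation sketched in the paper, generalising the case $k=2$, $s_1=s_2=\frac12$ of $(M_0,\eta)$. The page is a disc with holes. Each $(+1)$-surgery contributes a negative Dehn twist, and each Legendrian unknot contributes positive twists. The key computation is to show, via lantern relations, that every negative twist coming from the $1+k$ positive surgeries is cancelled against the positive twists of a dual pair of legs. The outcome is a positive factorisation, hence a Stein filling. For $k=2$ with $s_i=\frac12$ this should recover the proposition on $\eta$, so I will do that case first as a model and then induct on $k$, adding one dual pair and one $(+1)$-surgery at a time.

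\emph{Step 2: extending to $M$.} When the standard graph of $M$ contains that of $M(s_1,\dots,s_k)$ as a subgraph, $M$ is obtained from $M(s_1,\dots,s_k)$ by Legendrian surgeries. Some of these lengthen legs, meaning surgery on a Legendrian push-off of the last unknot in a chain. The others add new legs, meaning surgery on Legendrian regular fibres, each stabilised once to get framing $-2$, together with chains attached to them. Legendrian surgery preserves Stein fillability. The resulting structure is again generalised Lisca--Stipsicz on $M(-k;r_1,\dots,r_n)$ and is tight, being fillable. Since $-k\leq-2$, Corollary~\ref{cor:zero} then gives half convex Giroux torsion.

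\emph{Step 3: genuine half Giroux torsion.} In the proof of \cite[Lemma 2.3]{ACM}, the convex torsion layer sits around the vertical tori on which the twisting of the $(+1)$-surgered fibres is forced. If no $s_i=\frac12$, then each leg of a dual pair has some unknot with $\tb\leq-2$ beyond the first, so it carries an extra stabilisation. I will arrange two same-signed unknots to share an extra stabilisation of the same sign. This produces a basic slice adjacent to the convex layer, and glueing it on extends the $\pi$-twisting convex layer to one whose characteristic foliation rotates through a full half turn with linear boundary. That gives an honest half Giroux torsion layer. It is separating because $M$ is a rational homology sphere over $S^2$.

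The main obstacle is Step~1, specifically arranging rotation numbers so that the sign choice that yields the basic slice in Step~3 is compatible with the positive factorisation. I would first try taking all stabilisations on the $s_i$-legs negative and on the $(1-s_i)$-legs positive, and check the lantern cancellation explicitly on the model $M(-2;\frac23,\frac23,\frac13,\frac13)$.
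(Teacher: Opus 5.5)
Your Steps 1 and 2 follow the paper's route. You take a balanced structure on $M(s_1,\dots,s_k)$, factorise its planar monodromy positively, reach $M$ by Legendrian surgery, and get the convex layer from Corollary~\ref{cor:zero}. For general $s_i$, however, lanterns are not the tool. Each of the $k$ copies of $N^{-1}$ is absorbed by the substitution of \cite[Proposition 3.3]{M.f}, which is a chain of daisy relations \cite[Lemma 3.5]{PvHM}. Your induction also needs a justification for treating one pair at a time. That justification is that every twist from a fully positive leg commutes with every twist from a fully negative leg, and $N$ is central.

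The genuine gap is Step 3. You claim that, when no $s_i=\frac12$, each leg has an unknot beyond the first with $\tb\leq-2$; this is false. On $M'$ the legs are $[-2,-2]$ and $[-3]$, so there is no such unknot at all. More importantly, you are looking in the wrong place. The extra basic slice is attached to the layer of \cite[Lemma 2.3]{ACM}, which is built from the outermost slices. So you need \emph{every} leg of one sign (all $k$ of them, not two) to have a \emph{leading} unknot with at least two stabilisations of that sign. That means $m_1^i\leq-3$, i.e.\ $r_i<\frac12$. Since exactly one of $s_i,1-s_i$ is below $\frac12$, the missing idea is to relabel so that all $s_i>\frac12$; this is allowed because the manifold is symmetric under $s_i\leftrightarrow 1-s_i$. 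Then make the $(1-s_i)$-legs the fully negative ones, and cutting between outermost slices gives the rotative layer with slopes $-1$ and $\frac12-k$ through $\infty$. Your choice of making the $s_i$-legs negative, without this normalisation, breaks down as soon as the $s_i$ lie on both sides of $\frac12$. With the normalisation, the compatibility worry you raise about Step 1 disappears.

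The condition must also survive Step 2. The surgeries creating new legs should be fully positive, or have at least two negative stabilisations on their leading unknot. In general such a leading unknot needs $|m_1|-1$ stabilisations, not one. Finally, separation cannot come from $M$ being a rational homology sphere. $M(s_1,\dots,s_k)$ has Euler number $-k+\sum_i(s_i+1-s_i)=0$, hence $b_1=1$. The layer separates because its tori are vertical over a circle in the base $S^2$.
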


The fact that these manifolds carried universally tight zero-twisting structures without full Giroux torsion was already observed by Massot in \cite[Theorem 9]{Massot}; in fact, it can be shown that the concrete structures we choose in the proof are of this type. However, our construction allows other choices of signs and gives also virtually overtwisted examples.

The structures in Theorems \ref{teo:main} and \ref{teo:convex} are supported by planar open books, whose pages are obtained by stabilising the one of the corresponding $M(s_1,...,s_k)$. Additionally, on the latter manifold the contact structures are \emph{balanced}, that is (up to conjugation) the unknots in the first $k$ legs are fully negatively stabilised and the ones in the last $k$ legs are fully positively stabilised. We show that the corresponding monodromies all have a positive factorisation, providing a Stein filling $(X_M,I_{M,\xi})$ for the given contact structure $(M,\xi)$ when $M$ is a rational homology sphere. We can express the topology of $X_M$ as: \begin{equation}\chi(X_M)=1+b_2(X_M)\hspace{0.5cm}\text{ and }\hspace{0.5cm}b_2(X_M)=|G_M|-|G_{M(s_1,...,s_k)}|-1\label{eq:equation}\end{equation} where $G_M$ and $G_{M(s_1,...,s_k)}$ are the standard plumbing graphs of $M$ and $M(s_1,...,s_k)$ respectively. Note that $X_M$ is necessarily negative-definite because of planarity, see \cite{Etnyre}.

\begin{remark}
 Equation \eqref{eq:equation} implies that we can find Stein domains with any possible second Betti number which have some $(M,\xi)$ with half Giroux torsion as boundary.   
\end{remark} 

Note that if a manifold $M$ in Theorem \ref{teo:convex} were a Brieskorn sphere then by \cite[Lemma 8.1]{CM} its standard graph would be obtained by adding a single vertex to the one of $M(s_1,...,s_k)$, and it would induce the non-canonical orientation. By Equation \eqref{eq:equation} and standard homological arguments $M$ would then bound an integral homology ball, and combining this result with \cite[Corollary 3]{ImC} would give $-k=1-n$. This is not possible because $n\geq2k$; hence, no such $M$ can be a Brieskorn sphere. 

In \cite{ACM} we show that many Brieskorn spheres do not admit rational homology ball symplectic fillings, giving evidence for the validity of a conjecture of Gompf. Our results here show that a counterexample to this conjecture, for the non-canonical orientation, could still arise from a contact 3-manifold with a half Giroux torsion layer, as permitted by \cite[Theorem 1.5]{ACM}.

\subsection*{AI statement} {\smaller[1] The authors used AI only for proof-reading and language purposes.}

\section{Proofs}
\label{section:two}

We first explain how to go from a generalised Lisca-Stipsicz presentation to a compatible planar open book; the procedure is analogous to the one of \cite{Sch} and \cite[Theorem 1.5]{LS.III}, see also \cite[Section 2.1]{M.f}. One $(+1)$-surgery along an unknot with $\tb=-1$ is presented by an annulus with identity monodromy, the other $(+1)$-surgeries manifest in parallel negative Dehn twists along its core. Every other unknot in Figure \ref{fig:SFS} contributes a positive Dehn twist on a stabilised annulus. Concretely, we insert a hole, encircled by one boundary-parallel positive Dehn twist, for every stabilisation of every unknot in the surgery diagram; the stabilisation holes which correspond to positive stabilisations lie between the inner boundary of the annulus and its core, the negative ones between the core and the outer boundary.

Positive Dehn twists corresponding to legs unknots are realised subsequently from centre out.
From the leading unknot of each leg we get a positive Dehn twist along a push-off of the core modified by encircling an additional stabilisation hole for each negative stabilisation, and avoiding a stabilisation hole for each positive stabilisation. 

The twists corresponding to the subsequent unknots in each leg are obtained from a push-off of the twist corresponding to the preceding unknot, additionally encircling all of its negative and avoiding all of its positive stabilisation holes. Hence, a positive Dehn twist corresponding to the unknot $L_j^i$ encircles exactly the negative stabilisation holes of $L_h^i$ for $h\leq j$, together with all positive stabilisation holes except the ones arising from $L_h^i$ for $h\leq j$.

We can now show that all generalised Lisca-Stipsicz tight structures are zero-twisting.

\begin{proof}[Proof of Proposition \ref{prop:zero}]
 To prove that every contact structure of the form in Figure \ref{fig:SFS}(right) is zero-twisting (or overtwisted) we need a Legendrian realisation of the regular fibre whose contact framing agrees with its fibration framing. We recall that a regular fibre of a Seifert fibred space can be seen in the standard graph as a leaf on the central vertex, which in Figure \ref{fig:SFS}(right) corresponds to an additional parallel unknot in the surgery link $T_{1-e_0+n,-(1-e_0+n)}$. In the planar open book as described above this unknot is parallel to the core of the annulus; moreover, two such unknots, corresponding to a Legendrian regular fibre and its fibration push-off, are described by parallel curves on the page, meaning that their page and thus contact framing equals the fibration framing.
\end{proof}

Let us consider any Seifert fibred space $M$ whose standard graph contains the one of $M_0=M(-2;\frac{1}{2},\frac{1}{2},\frac{1}{2},\frac{1}{2})$. We prove that each $M$ carries a generalised Lisca-Stipsicz tight contact structure which is Stein fillable and contains a half convex Giroux torsion layer. 

\begin{prop}
 \label{prop:convex}
 Every $M=M(-2;r_1,...,r_n)$ with $n\geq4$ and $r_i\geq\frac{1}{2}$ for $i=1,...,4$ admits Stein fillable contact structures which contain half convex Giroux torsion. Furthermore, they are supported by a planar open book whose monodromy is obtained from the ones in Figure \ref{fig:factorisation} by positive stabilizations and adding positive Dehn twists. 
\end{prop}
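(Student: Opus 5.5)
The plan is to build the structures directly from a generalised Lisca-Stipsicz diagram and then argue tightness, fillability and the torsion layer through the planar open book of Section \ref{section:two}.

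\textbf{Step 1 (the base case $M_0$).} Take $M=M(-2;r_1,\dots,r_n)$ with $e_0=-2$. Figure \ref{fig:SFS} then has $1-e_0=3$ contact $(+1)$-surgeries, and a Legendrian chain for each leg $-\frac{1}{r_i}=[m_1^i,\dots,m_{k_i}^i]$. The condition $r_i\geq\frac12$ for $i=1,\dots,4$ means $m_1^i=-2$ for these four legs, so each leading unknot is stabilised exactly once. For $M_0$ itself I choose two of the four leading unknots stabilised positively and two negatively; this is the balanced configuration, and it should give the presentation of $\eta$ in Figure \ref{bundle}.

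Translating to the planar open book of Section \ref{section:two}, the page is an annulus with four stabilisation holes: two between the inner boundary and the core, and two between the core and the outer boundary. The monodromy consists of:
\begin{itemize}
\item a boundary-parallel positive twist around each hole;
\item two negative twists along the core;
\item four positive twists along push-offs of the core, each encircling or avoiding the appropriate holes.
\end{itemize}
The key computation is to rewrite this word as a product of positive Dehn twists, giving the factorisation in Figure \ref{fig:factorisation}. I would do this by lantern relations on the two four-holed sub-spheres. Each lantern relation absorbs one core twist: the two curves encircling the inner holes, together with the boundary twists, combine with a core-parallel curve. This step is the main obstacle. One has to track exactly which holes each curve encloses so that the lantern relations cancel both negative core twists; I would try the relation once on the two positive holes and once on the two negative holes.

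\textbf{Step 2 (general $M$).} For general $M$ the legs $i\geq5$ are extra chains. Each one adds a leading unknot with $\tb=m_1^i$, whose twist is a positive twist on a curve parallel to the core after stabilisation holes are inserted. The longer chains in the legs $i\leq4$ add further stabilised twists. In terms of the open book, these additions are:
\begin{itemize}
\item positive stabilisations, which insert holes together with boundary twists;
\item extra positive Dehn twists, namely the twists of the subsequent chain unknots and of the new legs.
\end{itemize}
A new core-parallel twist with its holes can be handled by the same lantern trick, or simply appended as a positive twist after stabilisation. Hence the monodromy stays positively factorised. Positive factorisation gives a Stein filling, and therefore tightness.

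\textbf{Step 3 (the torsion layer).} The structure is tight and generalised Lisca-Stipsicz with $e_0=-2\leq -2$. By Corollary \ref{cor:zero} it therefore contains a half convex Giroux torsion layer, so no explicit convex surface argument is needed.
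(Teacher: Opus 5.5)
\textbf{Verdict: genuine gap.} Your architecture matches the paper's: the balanced structure on $M_0$, its planar open book, two lantern relations giving a positive factorisation, then stabilisations and extra positive twists for general $M$, and Corollary~\ref{cor:zero} for the torsion layer. The gap is that the one computation carrying the proof is not done. That computation is the positive factorisation for $M_0$, which is also your only source of tightness. The grouping you suggest for it does not work.

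Name the curves as in Figure~\ref{fig:factorisation}:
\begin{itemize}
\item $p,q$ are the boundary twists about the positive holes, which lie inside the core $N$;
\item $r,s$ are the boundary twists about the negative holes, which lie outside $N$;
\item $P,Q$ are the twists of the positively stabilised leading unknots, each enclosing the inner boundary and the \emph{other} positive hole;
\item $R,S$ are the twists of the negatively stabilised leading unknots, each enclosing everything inside $N$ plus its own negative hole.
\end{itemize}
To absorb a copy of $N^{-1}$, $N$ must be one of the three interior curves of a lantern whose four boundary curves all occur as positive twists in the word. So on each side of $N$, two of these curves must cobound a pair of pants with $N$. Inside $N$ the only such pairs are $\{p,P\}$ and $\{q,Q\}$; outside $N$ the only ones are $\{r,R\}$ and $\{s,S\}$. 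Hence each useful lantern pairs a positive hole with a negative one.

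A lantern on the two positive holes, as you propose, lives instead in the sphere bounded by $N$, the inner boundary and the two positive holes. There $N$ is a \emph{boundary} curve. The relation expresses $c\,p\,q\,N$ as a product of $P$, $Q$ and a curve around the two holes, where $c$ is a twist about the inner boundary that does not occur in the word. So it cannot remove $N^{-1}$ without introducing another negative twist. The same happens on the negative side with $R$, $S$ and the outer boundary.

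The working identity is
\[N^{-2}pPrRqQsS=\left(N^{-1}pPrR\right)\left(N^{-1}qQsS\right)=aAbB.\]
The regrouping uses only that $N$ is disjoint from every other curve. The relation $pPrR=NaA$ is the lantern in the four-holed sphere bounded by $p,P,r,R$, inside which $N$ encloses exactly $p$ and $P$; likewise for $q,Q,s,S$.

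In Step~2 you should also say why this persists under the extra surgeries. After inserting the holes for the extra legs and chain unknots, $N$ still cobounds a pair of pants with $p,P$ and with $r,R$ (resp.\ $q,Q$ and $s,S$), so the same substitutions apply on the stabilised page. The new twists can then be appended because they come from Legendrian surgeries along knots on a page of the open book for $(M_0,\eta)$. They do not in general commute with $P,Q,R,S$: a new positively stabilised leg, for instance, gives a curve meeting both $P$ and $Q$. ``The same lantern trick'' plays no role there, since no new negative twists appear.
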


\begin{figure}[ht]
\begin{tikzpicture}[scale=1.3]
\begin{scope}[shift={(-6,0)}]
\begin{scope}[thick,gray!70]
\draw (0,0) circle (2.6cm);\draw (0,0) circle (0.2cm);\draw (0,0.7) circle (0.2cm);\draw (0,-0.7) circle (0.2cm);
\draw (2.1,0) circle (0.2cm);\draw (-2.1,0) circle (0.2cm);
\end{scope}

\draw[thick,dashed,color={r}] (0,0) circle (1.5cm);
\draw[thick,color={r}] (0,-0.7) circle (0.25cm);
\draw[thick,color={r}] (2.1,0) circle (0.25cm);
\draw[thick,color={r}] (0,0.35) circle (0.7cm);
\draw[thick,color={r}] (.4,0) circle (2.1cm);

\draw[thick,dashed,color={b}] (0,0) circle (1.3cm);
\draw[thick,color={b}] (0,0.7) circle (0.25cm);
\draw[thick,color={b}] (-2.1,0) circle (0.25cm);
\draw[thick,color={b}] (0,-0.35) circle (0.7cm);
\draw[thick,color={b}] (-.4,0) circle (2.1cm);

\draw[white] (0,-1.5) circle (0pt) node[below,black,scale=0.7]{$N$};
\draw[white] (0.65,-0.45) circle (0pt) node[right,black,scale=0.7]{$Q$};
\draw[white] (0.65,0.45) circle (0pt) node[right,black,scale=0.7]{$P$};
\draw[white] (0.22,-0.7) circle (0pt) node[right,black,scale=0.7]{$p$};
\draw[white] (0.22,0.7) circle (0pt) node[right,black,scale=0.7]{$q$};
\draw[white] (0.6,-2.1) circle (0pt) node[below,black,scale=0.7]{$R$};
\draw[white] (-0.6,-2.1) circle (0pt) node[below,black,scale=0.7]{$S$};
\draw[white] (2.1,-.25) circle (0pt) node[below,black,scale=0.7]{$r$};
\draw[white] (-2.1,-.25) circle (0pt) node[below,black,scale=0.7]{$s$};
\end{scope}

\begin{scope}[thick,gray!70]
\draw (0,0) circle (2.6cm);\draw (0,0) circle (0.2cm);\draw (.1,0.8) circle (0.2cm);\draw (-.1,-0.8) circle (0.2cm);
\draw (2.1,0) circle (0.2cm);\draw (-2.1,0) circle (0.2cm);
\end{scope}

\draw[thick,color={b},rotate around={165:(-.17,.3)}] (.5,1) ellipse (1.5cm and 1cm);
\draw[thick,color={r},rotate around={165:(-.17,.3)}] (-1.3,0.1) ellipse (1.5cm and 1cm);
\draw[thick,color={b},rotate around={20:(.7,.7)}] (-1,1) ellipse (1.45cm and .5cm);
\draw[thick,color={r},rotate around={20:(.7,.7)}] (.6,-.45) ellipse (1.45cm and .5cm);

\draw[white] (-0.8,-1.4) circle (0pt) node[right,black,scale=0.7]{$B$};
\draw[white] (-0.8,1.2) circle (0pt) node[right,black,scale=0.7]{$b$};
\draw[white] (0.8,1.4) circle (0pt) node[right,black,scale=0.7]{$A$};
\draw[white] (0.8,-1.2) circle (0pt) node[right,black,scale=0.7]{$a$};
\end{tikzpicture}
\caption{\smaller[1]{Planar monodromy for $\eta$ on $M(-2;\frac{1}{2},\frac{1}{2},\frac{1}{2},\frac{1}{2})$ and its positive factorisation. In gray are boundary components of the punctured disk. The full curves correspond to positive Dehn twists, the dashed curves represent two copies of the negative Dehn twist along $N$. Colours denote curves involved in a single application of the lantern relation.}}
\label{fig:factorisation}
\end{figure}

\begin{proof}
 We start from the structure on $M_0$ in Figure \ref{bundle}; since it is generalised Lisca-Stipsicz, it is zero-twisting and contains half convex Giroux torsion by Corollary \ref{cor:zero}. To prove it is also Stein fillable, we consider the corresponding open book on the left in Figure \ref{fig:factorisation}. By applying the lantern relation twice as in the picture, we obtain that the monodromy of the open book has a positive factorisation; indeed, denoting positive Dehn twists with the same letters as the curves, we have \[N^{-2}pPrRqQsS=\left(N^{-1}pPrR\right)\left(N^{-1}qQsS\right)=aAbB\] because $N$ commutes with all the twists in the left picture; therefore, the structure is Stein fillable. Note that, as mentioned in the introduction, the manifold $M_0$ carries a unique Stein fillable planar structure $\eta$, up to contactomorphism; hence, we just proved that the surgery presentation on the right in Figure \ref{bundle} represents $\eta$.

 Since each $M$ has a generalised Lisca-Stipsicz presentation obtained by performing $(-1)$-contact surgeries along some Legendrian knots in $(M_0,\eta)$, the resulting structure remains Stein fillable with half convex Giroux torsion. Indeed, the corresponding positive Dehn twists can be placed on the stabilised planar page after the lantern substitution of Figure \ref{fig:factorisation} has been applied.
\end{proof}

\begin{figure}[ht]
\begin{tikzpicture}[scale=1] 
      \tkzDefPoints{0/0/A, 1.5/1.5/B, 1.5/-1.5/D, 1.5/0.5/C, 1.5/-0.5/G, 3/1.5/E, 3/0.5/F, 0/-3/O}      
       \tkzDrawSegment(A,B)\tkzDrawSegment(A,D)\tkzDrawSegment(A,C)\tkzDrawSegment(A,G)\tkzDrawSegment(B,E)\tkzDrawSegment(C,F)
    \tkzDrawPoints[fill,black,size=5](A,B,C,D,G,E,F)\tkzDrawPoints[fill,white,size=5](O)
     \tkzLabelPoint[above left](A){$-2$} \tkzLabelPoint[above right](B){$-2$}\tkzLabelPoint[above right](C){$-2$}
     \tkzLabelPoint[above right](D){$-3$}\tkzLabelPoint[above right](G){$-3$}\tkzLabelPoint[above right](E){$-2$}\tkzLabelPoint[above right](F){$-2$}
\end{tikzpicture}\hspace{3cm}
\begin{tikzpicture}
\begin{scope}[scale=1.9]
\begin{scope}[shift={(-.06,.6)}, scale=1.05]
\draw [domain=-0.5*pi:0.5*pi, scale=0.5] plot (\x, {1.5*sin(\x r)});
\draw [domain=0.5*pi:1.48*pi, scale=0.5] plot (\x, {1.1*sin(\x r)+0.4});
\draw [domain=-0.5*pi:1.5*pi, scale=0.5] plot (\x, {-0.6*sin(\x r)-2.1});
\begin{scope}[shift={(2.25,-.39)},scale=.12]
\draw [domain=-0.5*pi:0.5*pi, scale=0.5] plot (\x, {0.8*sin(\x r)-.1});
\draw [domain=-0.5*pi:0.5*pi, scale=0.5] plot (\x, {-0.8*sin(\x r)-1.8});
\end{scope}
\begin{scope}[shift={(2.25,-.59)},scale=.12]
\draw [domain=-0.5*pi:0.5*pi, scale=0.5] plot (\x, {0.8*sin(\x r)-.1});
\draw [domain=-0.5*pi:0.5*pi, scale=0.5] plot (\x, {-0.8*sin(\x r)-1.8});
\end{scope}
\draw [shift={(.3,.07)}, fill=white,white] (1.75,-0.85) rectangle (1.8,-0.9);
\draw [shift={(-.1,-.12)}, fill=white,white] (1.75,-0.85) rectangle (1.8,-0.9);
\draw [shift={(-.3,-.22)}, fill=white,white] (1.75,-0.85) rectangle (1.8,-0.9);
\draw [shift={(-.45,-.3)}, fill=white,white] (1.75,-0.85) rectangle (1.8,-0.9);
\draw [shift={(-.55,-.37)}, fill=white,white] (1.75,-0.85) rectangle (1.8,-0.9);
\draw [shift={(-.68,-.43)}, fill=white,white] (1.75,-0.85) rectangle (1.8,-0.9);
\end{scope}

\begin{scope}[shift={(0,.2)}]
\draw [domain=-0.5*pi:-0.27*pi, scale=0.5] plot (\x, {1.5*sin(\x r)});
\draw [domain=-0.22*pi:0.5*pi, scale=0.5] plot (\x, {1.5*sin(\x r)});
\draw [domain=0.5*pi:1.48*pi, scale=0.5] plot (\x, {1.25*sin(\x r)+0.25});
\draw [domain=-0.5*pi:0.5*pi, scale=0.5] plot (\x, {-0.5*sin(\x r)-2});
\draw [domain=0.5*pi:1.5*pi, scale=0.5] plot (\x, {-0.35*sin(\x r)-2.15});
\begin{scope}[shift={(2.25,-.74)},scale=.12]
\draw [domain=-0.5*pi:0.5*pi, scale=0.5] plot (\x, {0.8*sin(\x r)-.1});
\draw [domain=-0.5*pi:0.5*pi, scale=0.5] plot (\x, {-0.8*sin(\x r)-1.8});
\end{scope}
\begin{scope}[shift={(2.25,-.54)},scale=.12]
\draw [domain=-0.5*pi:0.5*pi, scale=0.5] plot (\x, {0.8*sin(\x r)-.1});
\draw [domain=-0.5*pi:0.5*pi, scale=0.5] plot (\x, {-0.8*sin(\x r)-1.8});
\end{scope}
\draw [shift={(.19,-.07)}, fill=white,white] (1.75,-0.85) rectangle (1.8,-0.9);
\draw [shift={(-.09,-.15)}, fill=white,white] (1.75,-0.85) rectangle (1.8,-0.9);
\draw [shift={(-.275,-.215)}, fill=white,white] (1.75,-0.85) rectangle (1.8,-0.9);
\draw [shift={(-.39,-.245)}, fill=white,white] (1.75,-0.85) rectangle (1.8,-0.9);
\draw [shift={(-.5,-.28)}, fill=white,white] (1.75,-0.85) rectangle (1.8,-0.9);
\end{scope}

\begin{scope}[shift={(.06,-.2)},scale=0.95]
\draw [domain=-0.48*pi:-0.34*pi, scale=0.5] plot (\x, {1.3*sin(\x r)+.2});
\draw [domain=-0.28*pi:-0.2*pi, scale=0.5] plot (\x, {1.3*sin(\x r)+.2});
\draw [domain=-0.15*pi:.5*pi, scale=0.5] plot (\x, {1.3*sin(\x r)+.2});
\draw [domain=0.5*pi:1.5*pi, scale=0.5] plot (\x, {1.5*sin(\x r)});
\draw [domain=-0.5*pi:1.5*pi, scale=0.5] plot (\x, {-0.4*sin(\x r)-1.9});
\begin{scope}[shift={(-.67,-.59)},scale=.12]
\draw [domain=-0.5*pi:0.5*pi, scale=0.5] plot (\x, {-0.8*sin(\x r)-.1});
\draw [domain=-0.5*pi:0.5*pi, scale=0.5] plot (\x, {0.8*sin(\x r)-1.8});
\end{scope}
\draw [shift={(.09,.02)}, fill=white,white] (1.75,-0.85) rectangle (1.8,-0.9);
\draw [shift={(-.12,-.06)}, fill=white,white] (1.75,-0.85) rectangle (1.8,-0.9);
\draw [shift={(-.23,-.09)}, fill=white,white] (1.75,-0.85) rectangle (1.8,-0.9);
\draw [shift={(-.34,-.12)}, fill=white,white] (1.75,-0.85) rectangle (1.8,-0.9);
\draw [shift={(.4,0.1)},fill=white,white] (1.75,-0.85) rectangle (1.8,-0.9);
\begin{scope}[shift={(2.25,-.65)},scale=.18]
\draw [domain=-0.5*pi:-0.3*pi, scale=0.5] plot (\x, {0.8*sin(\x r)-.1});
\draw [domain=-0.1*pi:1.5*pi, scale=0.5] plot (\x, {0.8*sin(\x r)-.1});
\draw [domain=-0.5*pi:1.5*pi, scale=0.5] plot (\x, {-0.8*sin(\x r)-1.8});
\end{scope}
\end{scope}

\begin{scope}[shift={(.12,-.6)},scale=0.9]
\draw [domain=-0.48*pi:-0.32*pi, scale=0.5] plot (\x, {1.3*sin(\x r)+.2});
\draw [domain=-0.25*pi:-0.16*pi, scale=0.5] plot (\x, {1.3*sin(\x r)+.2});
\draw [domain=-0.12*pi:-0.06*pi, scale=0.5] plot (\x, {1.3*sin(\x r)+.2});
\draw [domain=-0.02*pi:0.5*pi, scale=0.5] plot (\x, {1.3*sin(\x r)+.2});
\draw [domain=0.5*pi:1.5*pi, scale=0.5] plot (\x, {1.5*sin(\x r)});
\draw [domain=-0.5*pi:1.5*pi, scale=0.5] plot (\x, {-0.4*sin(\x r)-1.9});
\begin{scope}[shift={(-.67,-.59)},scale=.12]
\draw [domain=-0.5*pi:0.5*pi, scale=0.5] plot (\x, {-0.8*sin(\x r)-.1});
\draw [domain=-0.5*pi:0.5*pi, scale=0.5] plot (\x, {0.8*sin(\x r)-1.8});
\end{scope}
\draw [shift={(.14,.04)}, fill=white,white] (1.75,-0.85) rectangle (1.8,-0.9);
\draw [shift={(-.12,-.06)}, fill=white,white] (1.75,-0.85) rectangle (1.8,-0.9);
\draw [fill=white,white] (1.75,-0.85) rectangle (1.8,-0.9);
\draw [shift={(.4,0.1)},fill=white,white] (1.75,-0.85) rectangle (1.8,-0.9);
\begin{scope}[shift={(2.25,-.65)},scale=.18]
\draw [domain=-0.5*pi:-0.3*pi, scale=0.5] plot (\x, {0.8*sin(\x r)-.1});
\draw [domain=-0.1*pi:1.5*pi, scale=0.5] plot (\x, {0.8*sin(\x r)-.1});
\draw [domain=-0.5*pi:1.5*pi, scale=0.5] plot (\x, {-0.8*sin(\x r)-1.8});
\end{scope}
\end{scope}

\begin{scope}[shift={(.18,-.9)},scale=0.85]
\draw [domain=-0.5*pi:-0.28*pi, scale=0.5] plot (\x, {1.5*sin(\x r)});
\draw [domain=-0.22*pi:-0.12*pi, scale=0.5] plot (\x, {1.5*sin(\x r)});
\draw [domain=-0.08*pi:-0.02*pi, scale=0.5] plot (\x, {1.5*sin(\x r)});
\draw [domain=0.02*pi:0.08*pi, scale=0.5] plot (\x, {1.5*sin(\x r)});
\draw [domain=0.12*pi:1.5*pi, scale=0.5] plot (\x, {1.5*sin(\x r)});
\draw [domain=-0.5*pi:1.5*pi, scale=0.5] plot (\x, {-0.4*sin(\x r)-1.9}) node[right,black,scale=0.9]{$(+1)$};
\draw [shift={(.22,.06)}, fill=white,white] (1.75,-0.85) rectangle (1.8,-0.9);
\draw [shift={(.06,.01)},fill=white,white] (1.75,-0.85) rectangle (1.8,-0.9);
\end{scope}
\begin{scope}[shift={(.18,-1.1)},scale=0.85]
\draw [domain=-0.5*pi:-0.3*pi, scale=0.5] plot (\x, {1.5*sin(\x r)});
\draw [domain=-0.25*pi:-0.18*pi, scale=0.5] plot (\x, {1.5*sin(\x r)});
\draw [domain=-0.135*pi:-0.045*pi, scale=0.5] plot (\x, {1.5*sin(\x r)});
\draw [domain=-0*pi:0.06*pi, scale=0.5] plot (\x, {1.5*sin(\x r)});
\draw [domain=0.1*pi:0.17*pi, scale=0.5] plot (\x, {1.5*sin(\x r)});
\draw [domain=0.21*pi:1.5*pi, scale=0.5] plot (\x, {1.5*sin(\x r)});
\draw [domain=-0.5*pi:1.5*pi, scale=0.5] plot (\x, {-0.4*sin(\x r)-1.9}) node[right,black,scale=0.9]{$(+1)$};
\draw [shift={(.22,.06)}, fill=white,white] (1.75,-0.85) rectangle (1.8,-0.9);
\end{scope}
\begin{scope}[shift={(.18,-1.3)},scale=0.85]
\draw [domain=-0.5*pi:-0.3*pi, scale=0.5] plot (\x, {1.5*sin(\x r)});
\draw [domain=-0.25*pi:-0.2*pi, scale=0.5] plot (\x, {1.5*sin(\x r)});
\draw [domain=-0.135*pi:-0.09*pi, scale=0.5] plot (\x, {1.5*sin(\x r)});
\draw [domain=-0.05*pi:0.04*pi, scale=0.5] plot (\x, {1.5*sin(\x r)});
\draw [domain=0.07*pi:0.15*pi, scale=0.5] plot (\x, {1.5*sin(\x r)});
\draw [domain=0.18*pi:0.27*pi, scale=0.5] plot (\x, {1.5*sin(\x r)});
\draw [domain=0.32*pi:1.5*pi, scale=0.5] plot (\x, {1.5*sin(\x r)});
\draw [domain=-0.5*pi:1.5*pi, scale=0.5] plot (\x, {-0.4*sin(\x r)-1.9}) node[right,black,scale=0.9]{$(+1)$};
\end{scope}
\end{scope}
\end{tikzpicture}
\caption{\smaller[1]{The contact structure $\eta'$ on $M'=M(-2;\frac{2}{3},\frac{2}{3},\frac{1}{3},\frac{1}{3})$.}}
\label{fig:3}
\end{figure}

We now modify the above example to get actual half Giroux torsion.

\begin{prop}
 \label{prop:actual}
 Every $M=M(-2;r_1,...,r_n)$ with $n\geq4$ whose standard graph contains the subgraph of $M':=M(-2;\frac{2}{3},\frac{2}{3},\frac{1}{3},\frac{1}{3})$ in Figure \ref{fig:3} admits Stein fillable contact structures with half Giroux torsion. 
\end{prop}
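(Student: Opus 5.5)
Our plan mirrors the proof of Proposition \ref{prop:convex}, working first on $M'=M(-2;\frac{2}{3},\frac{2}{3},\frac{1}{3},\frac{1}{3})$ with the structure $\eta'$ of Figure \ref{fig:3}, and then passing to a general $M$ by Legendrian surgery.

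\textbf{The structure $\eta'$.} Since $-\frac{3}{2}=[-2,-2]$ and $-3=[-3]$, the two legs with $r_i=\frac{2}{3}$ are chains of two unknots, with $\tb=-2$ and $\tb=-1$. The two legs with $r_i=\frac{1}{3}$ are single unknots with $\tb=-3$, hence twice stabilised. We choose both stabilisations of the third leg negative and both of the fourth leg positive, which is the balanced choice. This makes $\eta'$ a generalised Lisca-Stipsicz structure. Proposition \ref{prop:zero} then shows it is zero-twisting, once tight, and Corollary \ref{cor:zero} gives a half convex Giroux torsion layer.

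\textbf{Stein fillability.} We translate Figure \ref{fig:3} into a planar open book using the recipe described at the start of this section. The resulting page is the page of Figure \ref{fig:factorisation}, stabilised by one extra negative hole (from the $-3$ leg) and one extra positive hole (from the other $-3$ leg). The monodromy is then $N^{-2}$ composed with modified twists $p',P',r',R',q,Q,s,S$. In the first factor $N^{-1}p'P'r'R'$, the curves also encircle the added negative hole. We then look for a lantern-type substitution, possibly after a positive stabilisation relation or a daisy relation. The aim is to rewrite $N^{-1}p'P'r'R'$ as a product of positive twists, and symmetrically for the other factor using the added positive hole.

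Alternatively, one can note that the extra stabilisation holes are boundary-parallel positive twists. The modification then amounts to replacing the curves $r,s$ (the leading unknots of the $r_i=\frac12$ legs) by curves enclosing the new holes. In that case the lantern relation still applies verbatim, with one boundary component of each lantern sphere replaced by a disk containing two holes.

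This step, checking that the lantern relation survives the extra holes with $N$ still commuting, is the main obstacle. I would first try to arrange the holes so that each lantern subsurface simply has one boundary curve enclosing an extra hole.

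\textbf{Half Giroux torsion, not just convex.} We follow the argument of \cite[Lemma 2.3]{ACM}, which produces the half convex torsion layer from the two $(+1)$-surgeries together with the two same-signed leading unknots. Here the two legs with $\frac13$ carry an additional same-signed stabilisation, and by hypothesis these are legs of a common sign pair. The stabilisation yields a basic slice attached to the convex torsion layer. Its sign matches the layer, so the union is a twisting thickened torus whose twisting exceeds $\pi$ by the basic slice. From this we can extract an honest half Giroux torsion layer $T^2\times[0,1]$ with $\xi=\ker(\cos(\pi z)dx+\sin(\pi z)dy)$-type twisting $\pi$ between pre-Lagrangian boundaries. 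Tightness of $\eta'$ follows from its Stein fillability.

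\textbf{General $M$.} If the standard graph of $M$ contains that of $M'$, then $M$ is obtained from $M'$ by Legendrian surgeries along regular fibres and along unknots extending the legs. These surgeries add positive Dehn twists on a further stabilised page, placed after the lantern substitution, so the monodromy remains positive and the structure Stein fillable. The surgeries are performed away from the torsion layer, which lies in a neighbourhood of the central region, so the half Giroux torsion persists.
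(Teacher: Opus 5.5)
\textbf{Main gap: the signs on the two $\frac13$-legs.} You stabilise the two $\tb=-3$ unknots with opposite signs and call this balanced. In the paper, balanced means the $s_i$-legs are fully stabilised with one sign and the $(1-s_i)$-legs with the other. For $M'$ the paper takes $\rot(L^3_1)=\rot(L^4_1)=-2$, with both $[-2,-2]$ legs of the opposite sign.

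The torsion argument forces this choice. The half convex layer of Corollary~\ref{cor:zero} comes, as in the proof of \cite[Lemma 2.3]{ACM}, from joining the same-signed outermost basic slices of the singular fibres. This gives thickened tori with slopes $-1$ and $\infty$, glued along the $\infty$-slope. To add one more basic slice you must cut, inside the union of the neighbourhoods of \emph{all} fibres of one sign class, along a torus of slope $-\frac12$ (that is, $\frac12-2=-\frac32$ seen from the torsion torus). This produces a rotative layer running from $-1$ through $\infty$ to $-\frac32$.

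That cut exists only if every fibre in the class has a second same-signed stabilisation on its leading unknot. A $[-2,-2]$ leg has only one stabilisation, so both $[-2,-2]$ legs must lie in the other class, i.e.\ both $\frac13$-legs must have the same sign. With your choice each sign class contains a $[-2,-2]$ leg, so no extra basic slice exists and you keep only half convex torsion. Your later claim that the two $\frac13$-legs form ``a common sign pair'' contradicts the choice you made.

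\textbf{Fillability step.} This step is unfinished, and your fallback is wrong. You omit the twists of the second unknots ($\tb=-1$) of the $[-2,-2]$ chains. Such an unknot has no stabilisation holes, so by the recipe its curve is parallel to that of the leading unknot, and each $\frac23$-leg contributes a squared twist. The ``lantern verbatim'' alternative fails: two stabilisation holes give two boundary-parallel twists, not a twist about a curve enclosing both, and the relevant factor lives on a five-holed sphere.

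With the correct signs, take a positive $\frac23$-leg (hole $h$, curve $c$) and a negative $\frac13$-leg (holes $h_1,h_2$, curve $d$). They give the factor $N^{-1}c^2d\,h\,h_1h_2$, supported on the sphere bounded by $c,h,h_1,h_2,d$, which contains $N$. The daisy relation with centre $c$ rewrites $c^2\,h\,h_1h_2\,d$ as $N$ times three positive twists. Doing this for both pairs is the paper's argument: two daisy substitutions replace the two lanterns.

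\textbf{General $M$.} You should either check that the new surgery knots really miss the layer, which contains outermost slices of the negative fibre neighbourhoods and not only a central region, or do as the paper does. The paper requires every fully negative leading unknot to have at least two stabilisations, e.g.\ by taking all additional surgeries fully positive.
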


This result implies our main theorem.

\begin{proof}[Proofs of Proposition \ref{prop:actual} and Theorem \ref{teo:main}]
    We recall from the proof of \cite[Lemma 2.3]{ACM} how the vertical half convex Giroux torsion layer embeds in a convex decomposition of a zero-twisting structure with $e_0<-1$. Namely, we obtain the two neighbouring thickened tori with slopes $-1$ and $\infty$, attached along the $\infty$-slope, by connecting all the same-signed outermost basic slices of singular fibres. 

    There is actual half Giroux torsion when the twisting extends over the convex boundaries; we prove this by noticing that there is an additional basic slice attached to the half convex Giroux torsion layer, as in \cite[Proposition 2.11]{MV}. For this basic slice to exist we need that all singular fibres with outermost slices of one sign have additional same sign stabilisations on the leading unknot. Concretely, on $M'$ as in Figure \ref{fig:3} we have $\rot(L_1^3)=\rot(L_1^4)=-2$, and we can cut between the pairs of outermost slices to obtain a torus of slope $-\frac{1}{2}$ around the two singular fibres. Seen from the torus supporting the torsion, this slope corresponds to slope $\frac{1}{2}-2=-\frac{3}{2}$ because of the opposite orientation and $e_0=-2$ normalisation. The two neighbouring thickened tori thus have boundary slopes $-1,\infty$ and $\infty,-\frac{3}{2}$. They glue together into a rotative layer with boundary slopes $-1$ and $-\frac{3}{2}$ which contains $\infty$-slope, and hence contains half Giroux torsion.

    However, to keep fillability we now need additional surgeries on the other two singular fibres. The proof that the contact structure in Figure \ref{fig:3} is indeed Stein fillable comes by the positive factorisation of the monodromy, analogous to the case of $(M_0,\eta)$ except that the two lantern substitutions are replaced by two daisy substitutions of \cite[Lemma 3.5]{PvHM}. This proves that the contact structure $\eta'$ on $M'$ is Stein fillable with half Giroux torsion.

    As before, Stein fillable structures on any $M$ are obtained by Legendrian surgery on $(M',\eta')$. To preserve half Giroux torsion we restrict to the ones for which all the fully negative leading unknots have at least two stabilisations. This choice is of course possible for every $M$; in particular, we can take all additional surgeries fully positive. In this case, the half Giroux torsion layer is separating the two fibres with fully negative $(-3)$-leading unknots on one and all the other fibres on the other side. We conclude that every $M$ admits Stein fillable structure with half Giroux torsion.
\end{proof}

The construction described in the above proof can be replicated also for any Seifert fibred space $M$ whose standard plumbing graph contains as a subgraph the standard graph of $M(s_1,...,s_k)$, which denotes the manifold $M(-k;s_1,...,s_k,1-s_1,...,1-s_k)$ for $k\geq2$. 

\begin{proof}[Proof of Theorem \ref{teo:convex}]
 We write out the proof for $M(s_1,\dots,s_k)$; the result for any $M$ then follows the same as in Proposition \ref{prop:actual}. If we choose all stabilisations on the first $k$ legs of the same sign and all stabilisations on the other legs of the opposite sign (in other words, if the contact structure on this surface bundle is balanced), the structure is Stein fillable. Indeed, we get positive factorisation by $k$-times applying the substitution of \cite[Proposition 3.3]{M.f} because all twists on positive legs commute with all twists on negative legs and $N$ commutes with everything.
 
 The first part of the statement now follows directly from Stein fillability and Corollary \ref{cor:zero}. Furthermore, when $s_i>\frac{1}{2}$, we have an actual half Giroux torsion layer because $1-s_i<\frac{1}{2}$ and therefore their leading unknots have $m_1^i<-2$; in particular, they have at least two negative stabilisations. The layer with boundary slopes $-1$ and $\frac{1}{2}-k$, which contains half Giroux torsion, is then constructed as in the proof of Proposition \ref{prop:actual}.
\end{proof}

Consider the Stein filling $(X_M,I_{M,\xi})$ of any $(M,\xi)$ rational homology sphere as in Theorem \ref{teo:convex} given by the positive factorisation we described: the fact that $(X_M,I_{M,\xi})$ satisfies $\chi(X_M)=1+b_2(X_M)$ follows by homological arguments. Moreover, from the factorisation we immediately see that each additional Legendrian surgery increases by one the Euler characteristic of the filling, starting from the filling $D$ of $M(s_1,...,s_k)$ from the positive planar open book. We obtain Equation \eqref{eq:equation} with the following lemma.

\begin{lemma}
 For the $4$-manifold $D$ as above one has that \[\chi(D)=1-|G_{M(s_1,...,s_k)}|+(|G_{M(s_1,...,s_k)}|-1)=0\] where $G_{M(s_1,...,s_k)}$ denotes the standard graph of $M(s_1,...,s_k)$.   
\end{lemma}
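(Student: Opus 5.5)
The Euler characteristic of $D$ comes straight from the planar open book whose monodromy has been given a positive factorisation. A Lefschetz fibration over $D^2$ with fibre a planar surface $\Sigma$ having $b$ boundary components, and with $\ell$ vanishing cycles, has
\[\chi(D)=\chi(D^2)\chi(\Sigma)+\ell=(2-b)+\ell .\]
It is therefore enough to count two things for the balanced structure on $M(s_1,\dots,s_k)$: the holes of the page, and the positive Dehn twists in the factorisation produced by the $k$ substitutions of \cite[Proposition 3.3]{M.f}.

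\textbf{Step 1: the page.} I would follow the recipe of Section \ref{section:two}. The $(+1)$-surgery along the $\tb=-1$ unknot gives an annulus, so $b=2$ to start. Each stabilisation of each legs unknot then adds one hole. For the chain $L^i_1,\dots,L^i_{k_i}$ realising $-\frac{1}{r_i}=[m^i_1,\dots,m^i_{k_i}]$, the number of stabilisations is
\[(-m^i_1-1)+\sum_{j\geq2}(-m^i_j-2).\]
Summing over the $2k$ legs gives $b$.

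\textbf{Step 2: the factorisation.} Each substitution of \cite[Proposition 3.3]{M.f} is a generalised lantern/daisy relation. It trades the negative twists $N^{-1}$ together with the twists of the leading unknots, one from the leg of $s_i$ and one from the leg of $1-s_i$, for some positive twists whose number is fixed by the number of stabilisation holes involved. I expect this substitution to keep $\ell$ equal to the number of holes involved plus a fixed constant, as in the lantern case of Figure \ref{fig:factorisation}. There, $N^{-2}pPrRqQsS=aAbB$ replaces four positive twists per relation by two. The twists coming from the non-leading unknots, together with the boundary twists around the stabilisation holes, are simply carried along.

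\textbf{Step 3: combine the counts.} I would rewrite $\chi=(2-b)+\ell$ in terms of graph data using the Riemenschneider duality between $s_i$ and $1-s_i$. The key identity is that the leg of $1-s_i$ has continued-fraction data dual to the leg of $s_i$, so that the combined stabilisation counts and vertex counts of the two legs are related. This turns the expression into
\[1-|G_{M(s_1,\dots,s_k)}|+\bigl(|G_{M(s_1,\dots,s_k)}|-1\bigr),\]
where $|G|=1+\sum_i k_i$ counts the central vertex and the leg vertices. I expect the first bracket to come from $2-b$ together with the boundary twists, and the second from the twists corresponding to the non-central vertices.

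A cleaner alternative route to $\chi(D)=0$ would bypass the count. Homologically, the balanced filling is built from the zero-twisting layer much like $D^*K$ is for $M_0$ (where $\chi=0$). One could check $b_1(D)=0$ and $b_2(D)=0$ from the factorisation, and then use $\chi=1-b_1+b_2-b_3$ with $b_3(D)=b_1(D,\partial D)=0$; the condition $b_2=0$ can be read off from $M(s_1,\dots,s_k)$ being a rational homology sphere with $|H_1|$ matching.

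\textbf{Main obstacle.} The hard step is Step 2: determining exactly how many positive twists each substitution of \cite[Proposition 3.3]{M.f} produces in terms of the continued fractions of $s_i$ and $1-s_i$. I would first verify it on $M_0$ (where $\chi(D^*K)=0$ is known) and on $M'$ of Figure \ref{fig:3}, and then run the general duality count.
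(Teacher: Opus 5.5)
Your framework is the paper's: $\chi(D)=(2-b)+\ell$ is the same as $1-(\text{number of 1-handles})+(\text{number of 2-handles})$, and your page count in Step 1 is correct. The gap is that Step 2, which is the whole content of the lemma, is left as an expectation. Moreover, the mechanism you describe is not the right one.

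The substitution of \cite[Proposition 3.3]{M.f} is not a single relation that trades $N^{-1}$ and the two leading-unknot twists while the boundary-parallel twists are carried along. Already for $M_0$ the lantern $N^{-1}pPrR=aA$ consumes the boundary twists $p,r$ around stabilisation holes. What is needed, and what the paper does, is an explicit count. Write the dual expansions of $-\frac{1}{s}$ and $-\frac{1}{1-s}$ with parameters $b_1,\dots,b_m$. Each pair of legs then requires $m$ daisy relations \cite[Lemma 3.5]{PvHM}, with $b_1+1,b_2+2,\dots,b_m+2$ petals. A daisy with $c$ petals lowers (positive minus negative twists) by $c-1$. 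Starting from $2\sum_j(b_j+1)$ positive twists and one negative twist, you therefore end with exactly $\sum_j(b_j+1)$ positive twists.

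You also need the duality identity in precise form, not just ``related''. If $-\frac{1}{s}=[a_1,\dots,a_p]$ and $-\frac{1}{1-s}=[c_1,\dots,c_q]$, then $\sum_j(-a_j-2)=q-1$ and $\sum_j(-c_j-2)=p-1$. So the two legs carry $p+q$ stabilisations, exactly as many as their vertices. This gives $|G_{M(s_1,\dots,s_k)}|$ holes and $|G_{M(s_1,\dots,s_k)}|-1$ vanishing cycles, hence $\chi(D)=0$. Without the petal count nothing pins down $\ell$: the number of twists in a positive factorisation is not determined by the monodromy, and the lantern relation itself changes it.

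Your alternative route fails. $M(s_1,\dots,s_k)$ is not a rational homology sphere: its orbifold Euler number is $-k+\sum_i s_i+\sum_i(1-s_i)=0$, so $b_1=1$ (it is a surface bundle, and $M_0$ is a torus bundle). Moreover, $b_1(D)=b_2(D)=b_3(D)=0$ would give $\chi(D)=1$, contradicting the lemma. Since $b_3(D)=0$, the claim $\chi(D)=0$ actually forces $b_1(D)=1+b_2(D)\geq1$, as for $D^*K$. The correct homological content is $b_1(D)=1$ and $b_2(D)=0$. This is equivalent to the lemma, because $H_1(M)\to H_1(D)$ is onto for a Stein domain, but it is no easier to verify than the handle count.
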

\begin{proof}
 We prove the result for $k=1$, for the other cases the same computation is repeated $k$-times. We rely on the proof of \cite[Proposition 3.3]{M.f} and we use the same notations. We can write \[-\frac{1}{s}=[-2^{\times b_1},-b_2-3,-2^{\times b_3},\dots,-b_m-2]\:\text{ and }\:-\frac{1}{1-s}=[-b_1-3,-2^{\times b_2},-b_3-3,\dots,-2^{\times b_m}]\] for some non-negative integers $b_1,...,b_m$. Then the page has $1+\sum_{j=1}^m(b_j+1)$ holes, that is one more than the number of stabilisations for the two legs with framings $s$ and $1-s$, one negative Dehn twist and $2\sum_{j=1}^m(b_j+1)$ positive Dehn twists; among the latter ones, there are $\sum_{j=1}^m(b_j+1)$ boundary parallel twists, while $\sum_{j=1}^m(b_j+1)$ come from the unknots in the two chains. 
 
 To obtain a positive factorisation, we need to apply $m$ daisy relations \cite[Lemma 3.5]{PvHM} and the $j$-th daisy has $b_j+2$ petals (except the first which has $b_1+1$); here, the lantern relation is just a daisy relation with two petals. One application of the daisy relation with $b$ petals reduces the difference between the number of positive and negative Dehn twists by $b-1$; therefore, in the final positive factorisation the number of positive twists is reduced by $1+b_1+\sum_{j=2}^{m}(b_j+1)$. Thus, we end up with $1+\sum_{j=1}^m(b_j+1)$ holes, that is the number of $1$-handles, and $\sum_{j=1}^m(b_j+1)$ positive Dehn twists, that is the number of $2$-handles; finally, the number of vertices equal $|G_{M(s_1,...,s_k)}|=1+\sum_{j=1}^m(b_j+1)$, giving the formula for Euler characteristic.
\end{proof}

\end{document}